\newtheorem{thm}{Theorem} \newtheorem{propo}{Proposition} 
\newtheorem{lemma}{Lemma}  \newtheorem{coro}{Corollary}
\definecolor{red1}{rgb}{1,0.9,0.9} \definecolor{blue1}{rgb}{0.9,0.9,1} \definecolor{green1}{rgb}{0.9,1,0.9} 
\definecolor{yellow1}{rgb}{1,1,0.9} \definecolor{yellow2}{rgb}{1,1,0.8}
\let\paragraph\subsection
 \newcommand{\ZZ}{\mathbb{Z}}  
   \newcommand{\HH}{\mathbb{H}}
\title{The hydrogen identity for Laplacians}
\author{Oliver Knill}
\date{March 4, 2018}
\address{Department of Mathematics \\ Harvard University \\ Cambridge, MA, 02138 }
\subjclass{11P32, 11R52, 11A41}
\keywords{Graph Laplacian, Spectral radius, Connection Laplacian, graphs}
\begin{document}
\maketitle

\begin{abstract}
For any $1$-dimensional simplicial complex $G$ defined by a finite simple graph, 
the hydrogen identity $|H|=L-L^{-1}$ holds,
where $|H|=(|d|+|d|^*)^2$ is the sign-less Hodge Laplacian defined by the sign-less 
incidence matrix $|d|$ and where $L$ is the connection Laplacian. 
Having linked the Laplacian spectral radius $\rho$ of $G$ with the spectral radius of the adjacency matrix
its connection graph $G'$ allows for every $k$ to estimate $\rho \leq r_k-1/r_k$, where
$r_k=1+(P(k))^{1/k}$ and $P(k)={\rm max}_x P(k,x)$, where $P(k,x)$ is the number of paths of 
length $k$ starting at a vertex $x$ in $G'$. The limit $r_k-1/r_k$ for $k \to \infty$ is
the spectral radius $\rho$ of $|H|$ which by Wielandt is an upper bound for the spectral 
radius $\rho$ of $H=(d+d^*)^2$, with equality if $G$ is bipartite. We can relate so the growth rate
of the random walks in the line graph $G_L$ of $G$ with the one in the connection graph $G'$ of $G$.
The hydrogen identity implies that the random walk $\psi(n) = L^n \psi$ on the connection graph 
$G'$ with integer $n$ solves the $1$-dimensional Jacobi equation $\Delta \psi=|H|^2 \psi$ with 
$\Delta u(n)=u(n+2)-2 u(n)+u(n-2)$ and assures that every solution is represented by such a 
reversible path integral. The hydrogen identity also holds over any finite field $F$. There, the
dynamics $L^n \psi$ with $n \in \ZZ$ is a reversible cellular automaton with alphabet $F^G$. 
By taking products of simplicial complexes, such processes can be defined over any lattice $\ZZ^r$.
Since $L^2$ and $L^{-2}$ are isospectral, by a theorem of Kirby, $L^2$ is always similar to 
a symplectic matrix if the graph has an even number of simplices. 
By the implicit function theorem, the hydrogen relation is robust in the following sense:
any matrix $K$ with the same support than $|H|$ can still 
be written as $K=L-L^{-1}$ with a connection Laplacian satisfying 
$L(x,y)=L^{-1}(x,y)=0$ if $x \cap y =\emptyset$. 
\end{abstract}

\section{Introduction}

\paragraph{}
The largest eigenvalue $\rho$ of the Kirchhoff graph Laplacian $H_0$ of a graph $G$
depends in an intricate way on the vertex degrees of $G$. Quite a few estimates for the spectral
radius $\rho$ are known \cite{GroneMerrisSunder1,GroneMerrisSunder2,Zhang2004,Das2004,
Guo2005,Shi2007,ShiuChan2009, LiShiuChan2010,ZhouXu}. These estimates often use maximal local 
vertex degree quantities which are random walk related. 
We look at the problem by relating the Hodge Laplacian $H=H_0 \oplus H_1$ with
an adjacency matrix $A$ of a connection graph $G'$ defined by $G$. 
This allows to use estimates for the later, like
\cite{BrualdiHoffmann,Stanley1987,Stevanovic} or random cases  \cite{KrivelevichSudakov} 
to get estimates for the graph Laplacian. 
The adjacency spectral radius $r(G)$ is accessible through random walk estimates and therefore 
has monotonicity properties like $r(A) \leq r(B)$ if $A$ is a subgraph of $B$. While
the spectrum of the Kirchoff Laplacian $H_0$ has already been analyzed with the help of the line graph,
an idea going back at least to \cite{AndersonMorely1985}, 
the hydrogen relation $L-L^{-1}=|H|$ considered here is an other link. 

\paragraph{}
Adjacency matrices are more pleasant for estimates than Laplacians because they are 
$0-1$ matrices and so non-negative. As the spectral radius of an adjacency matrix $A$ is a 
Lyapunov exponent which can be estimated as $\rho(A) \leq \rho(A^k)^{1/k} \leq (P(k))^{1/k}$,
where $P(k)$ is the number of paths of length $k$ in the corresponding graph, one can
get various estimates by picking a fixed $k$ and go about estimating the maximal number of paths of length
$k$ starting at some point. Already the simplest estimates for $A$ leads to effective estimate 
$\rho$, especially for Barycentric refinements, where $\rho=(d+3)-(d+3)^{-1}$ where $d$ is 
the maximal vertex degree. In general, we get in the simplest case $\rho \leq r-1/r$, 
where $r$ is 1 plus the maximal of the sum of a pair vertex degrees of adjacent vertices. 
Global estimates like \cite{BrualdiHoffmann,Stanley1987} for adjacency matrices lead 
to other upper bounds. Because the spectrum of $L$ has negative parts, the 
Schur inequality discussed below can be useful too. It produces insight about the spectrum of 
$L$ and so about the spectrum of $|H|$. 

\paragraph{}
For a finite abstract simplicial complex $G$ - a finite set of non-empty sets closed 
under the operation of taking non-empty subsets - the connection Laplacian $L$ is
defined by the properties $L(x,y)=1$ if $x \cap y$ intersect and $L(x,y)=0$ if $x \cap y =\emptyset$.
The matrix $L$ is the Fredholm matrix $L={\bf 1}+A$ for the adjacency matrix $A$ of the connection
graph $G'$ and has remarkable properties. First of all, it is always unimodular. Then, the sum of the 
matrix entries of the inverse $L^{-1}$ is the Euler characteristic $\chi(G)$ of $G$. One can also
write $\chi(G)=p(G)-n(G)$, where $p(G)$ are the positive and $n(G)$ are the 
negative eigenvalues of $L$ \cite{HearingEulerCharacteristic}. Finally, the matrix entries 
$g(x,y)$ of the inverse $g=L^{-1}$ can be given
explicitly as $\omega(x) \omega(y) \chi(St(x) \cap St(y))$, where 
${\rm St}(x) = \{ y \supset x \; | \; y \in G\}$ is the star of $x$ 
and $\omega(x)=(-1)^{{\rm dim}(x)}$ \cite{ListeningCohomology}.

\paragraph{}
In the case of a 1-dimensional complex, $G$ enjoys more properties:
the spectrum of the square $L^2$ satisfies the symmetry $\sigma(L^2)=1/\sigma(L^2)$.
This relation indicates already that $L$ is of symplectic nature. It is equivalent to a
functional equation $\zeta(s)=\zeta(-s)$ for the zeta function associated to $L$ \cite{DyadicRiemann}.
The hydrogen operator $L-L^{-1}$ was of interest to us at first as its trace is 
a Dehn-Sommerville type quantity $\sum_{x} \chi(S(x))$ \cite{DehnSommerville}, 
where $S(x)$ is the unit sphere of a vertex $x$ in the Barycentric refinement graph. It is zero 
for nice triangulations of even dimensional manifolds. The main point we make here is that
in one dimensions, $L-L^{-1}$ always is the sign-less Hodge operator $|H|=(|d|+|d|)^*$, where
$|d|$ is the sign-less gradient. For bipartite graphs, the sign-less Hodge operator $|H|$
is conjugated to the usual Hodge operator $H$. 
In \cite{HearingEulerCharacteristic}, we have looked at the hydrogen relation 
only in the case of graphs which were Barycentric refinements and so bipartite which 
implied $|H|$ to be conjugated to $H$. 

\section{The hydrogen relation}

\paragraph{}
The Kirchhoff Laplacian $H_0$ of a graph $G$ with $v$ vertices is the $v \times v$ matrix $H=B-A$, 
where $B$ is the diagonal vertex degree matrix and $A$ is the adjacency matrix of $G$.
The relation $d_0f( (x,y) )=f(y)-f(x)$ defines an incidence matrix $d$ which is defined, once an 
orientation has been chosen on the edge set $E$. The choice of orientation does not affect
$H$, nor the spectrum of $D=d+d^*$. The matrix $d_0$ is a $e \times v$ matrix, where
$e$ is the number of edges of $G$. It defines naturally a $(e+v) \times (e+v)$ matrix $d$ and so
$D=d+d^*$ with $D^2=H$. The matrix
$H_0$ can now be rewritten as $H_0=d_0^* d_0$. It is so the discrete analogue of 
$\Delta = {\rm div} \circ {\rm grad}$ in calculus. If we look at the sign-less derivative 
$|d_0|$, which is a 0-1 matrix,  then still $|d_0|^2=0$ in one dimensions and $
|H|=(|d|+|d|^*)^2=|H_0| \oplus |H_1|$ is the sign-less Laplacian. Unlike $H_0$, which always 
has an eigenvalue $0$, the sign-less Kirchhoff matrix $|H_0|$ is invertible if the graph $G$ is not bipartite. 

\paragraph{}
The Barycentric refinement $G_1$ of a finite simple graph $G=(V,E)$ is a new graph $\Gamma$ with 
vertex set $V_1=V \cup E$ and where a pair $(a,b) \in V \times E$ 
is in the edge set $E_1$ if $a \subset b$. If $v$ is the cardinality of $V$ and $e$ is the 
cardinality of $E$, then $\chi(G)=v-e$ is the Euler characteristic of $G$.
The new graph $\Gamma$ has $n=v+e$ vertices and $m=\sum_{v} {\rm deg}(v) = 2e$ 
edges. The invariance of Euler characteristic under Barycentric refinements is in $1$-dimensions just 
the Euler handshake formula. The Euler characteristic $\chi(G)$ of a simplicial complex $G$ is in 
general (not necessarily $1$-dimensional case) an invariant as the $f$-vector $f=(v,e,\dots)$ 
of a complex is explicitly mapped to a new vector $S f$ for the matrix $S$ which has 
$(1,-1,1,\dots)$ as eigenvalues of $A^T$. The matrix $S$ is upper triangular and given by
$S_{ij} = i! S(j,i)$, where $S(j,i)$ are {\bf Stirling numbers} of the second kind.

\paragraph{}
The $1$-form Laplacian $H_1 = d d^*$ defined by the $v \times e$ matrix $d$ has the same non-zero spectrum
than $H_0 = d^* d$. This super symmetry relation between the 1-form Laplacian $H_1$
and 0-form Laplacian $H_0$ appears have been used first in \cite{AndersonMorely1985}.
The nullity of $H_0$ is the genus $b_1$ while the nullity of $H_0$ is $b_0$ the 
number of connected components of $G$. The Euler characteristic $\chi=v-e$ satisfies the Euler-Poincar\'e
formula $b_0-b_1$. The Dirac matrix $D=d+d^*$ is a $n \times n$ matrix with $n=v+e$. It defines
$H=D^2$. Because $d^2=(d^*)^2=0$, we have $D^2=d^* d + d d^*$ so that
$H=H_0 \oplus H_1$ decomposes into two blocks. If we take the absolute values of all
entries, we end up with the sign-less Hodge Laplacian $|H|=(|d|+|d|^*)^2=|d| |d|^* + |d|^* |d|$. 

\paragraph{}
The connection matrix of $G$ is $L(x,y) = 1$ if $x \cap y \neq \emptyset$
and $L(x,y)=0$ if $x \cap y = \emptyset$. For $1$-dimensional complexes $G$,
$L$ is a $(v+e) \times (v+e)$ matrix. It has the same size than 
the Hodge operator $H$ or its sign-less Hodge matrix $|H|$.
But unlike $H$ or $|H|$, the matrix $L$ is always invertible and its inverse 
$L^{-1}$ is always integer-valued. Also, unlike the Hodge matrix $H$, which is reducible, the matrix $L$ is 
always irreducible if $G$ is connected. The reason for this ergodicity property is that $L$ is a 
non-negative matrix and that $L^n$ is a positive matrix for large enough $n$ as one can see when looking
at a random walk interpretation: the matrix is the adjacency matrix of the graph graph $G'$ for which 
loops have been attached to each vertex $x$. 

\begin{thm}[Hydrogen relation]
For any finite simple graph, $|H|=L-L^{-1}$.
\end{thm}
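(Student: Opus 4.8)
The plan is to verify the identity $|H| = L - L^{-1}$ entrywise, indexing rows and columns of all three $(v+e)\times(v+e)$ matrices by the simplices of $G$, i.e.\ by vertices and edges. I would organize the computation by the types of the index pair $(x,y)$: vertex--vertex, vertex--edge, and edge--edge, and in each case compute $|H|(x,y)$ directly from $|H| = |d|\,|d|^* + |d|^*\,|d|$ and $L^{-1}(x,y)$ from the explicit formula $L^{-1}(x,y) = \omega(x)\omega(y)\,\chi(\mathrm{St}(x)\cap\mathrm{St}(y))$ quoted earlier, with $\omega(x) = (-1)^{\dim(x)}$. Since $L(x,y)$ is just $1$ when $x\cap y\neq\emptyset$ and $0$ otherwise, it suffices to show $L^{-1}(x,y) = L(x,y) - |H|(x,y)$ in each case.

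First I would handle the easiest reduction: if $x\cap y = \emptyset$, then $L(x,y) = 0$; also $\mathrm{St}(x)\cap\mathrm{St}(y) = \emptyset$ since a common coface would contain both $x$ and $y$, so $L^{-1}(x,y) = 0$; and $|H|(x,y) = 0$ because a nonzero entry of $|d|\,|d|^*$ or $|d|^*\,|d|$ forces $x$ and $y$ to share a sub- or super-simplex, hence to intersect. So the identity is trivially $0=0$ off the connection graph, and everything interesting happens when $x\cap y\neq\emptyset$, where $L(x,y)=1$ and I must show $|H|(x,y) = 1 - L^{-1}(x,y)$.

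Next, the diagonal and near-diagonal cases. For a vertex $x$: $|H_0|(x,x) = \deg(x)$ (the $|d_0|^*|d_0|$ term), while $\mathrm{St}(x)\cap\mathrm{St}(x) = \mathrm{St}(x)$ consists of $x$ itself together with its $\deg(x)$ incident edges, so $\chi(\mathrm{St}(x)) = 1 - \deg(x)$, giving $L^{-1}(x,x) = 1 - \deg(x) = 1 - |H|(x,x)$. For an edge $x$: $|H_1|(x,x) = 2$ (each edge meets two incident edges at either endpoint via $|d|\,|d|^*$, contributing its two endpoints... more precisely the diagonal of $|d|\,|d|^*$ counts the two vertices of $x$), while $\mathrm{St}(x) = \{x\}$ so $\chi = (-1)^1 = -1$ and $\omega(x)^2 = 1$, giving $L^{-1}(x,x) = -1 = 1 - 2$. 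For $x$ a vertex and $y$ an incident edge: $L(x,y) = 1$, the mixed blocks $|d_0|^*, |d_0|$ do not contribute to vertex--edge entries of $|H| = |H_0|\oplus|H_1|$, so $|H|(x,y) = 0$; and $\mathrm{St}(x)\cap\mathrm{St}(y) = \{y\}$, so $\chi = -1$, $\omega(x)\omega(y) = -1$, and $L^{-1}(x,y) = 1 = 1 - 0$. For two distinct vertices $x,y$ joined by an edge: $|H_0|(x,y) = 1$ (the adjacency contribution), $\mathrm{St}(x)\cap\mathrm{St}(y)$ is the set of edges joining $x$ to $y$; in a simple graph this is the single edge $\{x,y\}$, so $\chi = -1$, $\omega(x)\omega(y) = 1$, $L^{-1}(x,y) = -1 = 1 - 2$... here I need to double-check that $|H_0|(x,y)$ for the sign-less Laplacian equals the number of edges, i.e.\ $1$, and reconcile signs carefully. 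Finally, two distinct edges $x,y$ sharing a vertex: $|H_1|(x,y) = 1$ from $|d|\,|d|^*$, and $\mathrm{St}(x)\cap\mathrm{St}(y) = \emptyset$ (no simplex contains two distinct edges in a $1$-complex), so $L^{-1}(x,y) = 0 = 1 - 1$.

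The main obstacle I anticipate is bookkeeping the signs and the block structure consistently --- in particular making sure the ``sign-less'' passage $|d|\mapsto|H|$ is applied to the right blocks and that the explicit inverse formula's $\omega(x)\omega(y)\chi(\mathrm{St}(x)\cap\mathrm{St}(y))$ is evaluated with the correct orientation conventions, so that no spurious minus signs appear. A secondary point needing care is that the argument genuinely uses $1$-dimensionality: it is here that $|d|^2 = 0$, that $H$ and $|H|$ split as $H_0\oplus H_1$, and that stars of two distinct edges never meet --- all of which fail in higher dimension, where $L - L^{-1}$ is no longer $|H|$. Once all finitely many cases are checked and assembled, the entrywise agreement $L^{-1}(x,y) = L(x,y) - |H|(x,y)$ gives $|H| = L - L^{-1}$.
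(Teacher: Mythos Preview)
Your overall strategy---an entrywise verification of $L-L^{-1}=|H|$ split by simplex type, using the explicit formula $L^{-1}(x,y)=\omega(x)\omega(y)\chi(\mathrm{St}(x)\cap\mathrm{St}(y))$---is exactly the paper's approach. The execution, however, has a real gap in the ``easy reduction'' of your second paragraph.

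You claim that $x\cap y=\emptyset$ forces $\mathrm{St}(x)\cap\mathrm{St}(y)=\emptyset$ and $|H|(x,y)=0$. Both assertions fail for two \emph{adjacent} vertices $x=\{a\}$, $y=\{b\}$: these are disjoint as sets, yet the edge $\{a,b\}$ lies in both stars, and $|H_0|(x,y)=1$. Your justification (``a common coface would contain both $x$ and $y$'', ``sharing a sub- or super-simplex, hence to intersect'') conflates two different relations: having a common coface is \emph{not} the same as intersecting as sets. Distinct vertices never intersect, so $L(x,y)=0$ for them regardless of adjacency.

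This error propagates: when you revisit the adjacent-vertex case in the next paragraph you are implicitly still working under the assumption $L(x,y)=1$ (your target is ``$|H|(x,y)=1-L^{-1}(x,y)$''), which is why the numbers do not close and you are left writing ``$-1=1-2$'' and flagging a sign worry. The correct check there is
\[
|H|(x,y)=L(x,y)-L^{-1}(x,y)=0-(-1)=1,
\]
which matches $|H_0|(x,y)=1$ on the nose; there is no sign issue to reconcile.

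The fix is simply to refine the disjoint case: for $x\cap y=\emptyset$, separate (i) two adjacent vertices, where $L=0$, $L^{-1}=-1$, $|H|=1$, from (ii) all other disjoint pairs (non-adjacent vertices, non-incident vertex--edge, disjoint edges), where indeed $\mathrm{St}(x)\cap\mathrm{St}(y)=\emptyset$ and all three entries vanish. With that correction your remaining cases (diagonals, incident vertex--edge, edges sharing a vertex) are computed correctly and the proof goes through.
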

\begin{proof}
While we have already seen this in \cite{ListeningCohomology}, we have looked there only
at the situation when $H$ and $|H|$ were similar. The proof of the hydrogen relation
is in the $1$-dimensional case especially simple, as we can explicitly write down the
entries of the inverse $L^{-1}$. We have $(L-L^{-1})(x)  = \chi(S(x))= {\rm deg}(x)$
where ${\rm deg}(x)$ is the vertex degree of the Barycentric refinement. The 
Euler characteristic $\chi(S(x))$ of the unit sphere $S(x)$ is equal to the vertex degree 
of an original vertex $x$ and equal to $2$ for an edge, because an edge only has two neighbors 
provided that $G$ is $1$-dimensional.
If $x$ is vertex and $y$ an edge containing $x$, then $L(x,y)=1$ and 
$L^{-1}(x,y)=1$. If $x,y$ are vertices, then $L(x,y)=0$ and $L^{-1}(x,y)=-1$. 
If $x,y$ are edges, then $L(x,y)=1$ and $L^{-1}(x,y)=0$. 
\end{proof}

\paragraph{}
The name ``hydrogen" had originally been chosen because the Laplacian $L=-\Delta/(4\pi)$ in Euclidean
space has an inverse with a kernel $1/|x-y|$ of the potential of the hydrogen atom. 
While the Laplacian in $R^3$ is not invertible, the integral operator $L^{-1}$ allows formally 
to see $L-L^{-1}$ as a quantum mechanical system, where at each point of space, a ``charge" is 
located. We can think of the Laplacian $L$ itself as a kinetic, and the ``integral operator" $L^{-1}$ as a
potential theoretic component of $H$. This is a reason also why for the invertible connection
Laplacian $L$, the inverse entries $g(x,y)$, the Green's functions, has an
interpretation as a potential energy between two simplices $x,y$. 

\paragraph{}
The energy theorem 
$$   \sum_{x} \sum_y g(x,y) = \chi(G) $$
which holds for an arbitrary simplicial complex, then suggests to see the Euler characteristic 
as a ``total potential energy". The kinetic operator $L$ is unitary conjugated to an operator $M$ for which 
the sum $\sum_x \sum_y M(x,y) = \omega(G)$ is the Wu characteristic. As it involves pairs
of intersecting simplices, the Wu characteristic can be seen as a 
potential theoretic part and Euler characteristic as kinetic energy. The hydrogen relation combines the kinetic
and potential theoretic part. For locally Euclidean complexes, for which Poincar\'e duality
holds, the two energies balance out and the total energy is zero. For structures
with boundary, the energy (the curvature of the valuation) is supported on the boundary, similarly 
as the charge equilibria of Riesz measures are supported on the boundary of regions in potential theory. 

\paragraph{}
Since for every connected graph, the operator $L$ is irreducible with a unique
Perron-Frobenius eigenvalue, this is also inherited by the sign-less Hodge operator $|H|$. 
The corresponding eigenvalue has no sign changes. Now, the maximal eigenvalue can 
only become smaller if $|H|$ is replaced by $H$. 

\begin{coro}[Relating Hodge and Adjacency] 
The spectral radius $\rho$ of $H$ is bounded above by $r-1/r$, where
$r$ is the spectral radius of $L$. 
\end{coro}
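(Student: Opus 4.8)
The plan is to combine the Hydrogen relation $|H| = L - L^{-1}$ with two standard facts: the Perron–Frobenius theory for the non-negative irreducible matrix $L$, and Wielandt's theorem (or rather the elementary dominance principle) relating the spectral radius of a non-negative matrix to that of any matrix whose entries are dominated entrywise. First I would invoke the observation made in the paragraph immediately preceding the corollary: since $G$ is connected, $L = \mathbf{1} + A$ is the adjacency matrix of the connection graph $G'$ with loops, hence $L$ is a non-negative irreducible matrix, and so by Perron–Frobenius it has a simple positive eigenvalue $r = \rho(L)$ with a positive eigenvector, and $r$ equals the spectral radius of $L$ in the usual sense. Consequently $|H| = L - L^{-1}$ has $r - 1/r$ as an eigenvalue, with the same positive eigenvector; I would note that $r - 1/r$ is in fact the spectral radius of $|H|$ (the function $t \mapsto t - 1/t$ is increasing, and the eigenvalues of $|H|$ are $\lambda_i - 1/\lambda_i$ where $\lambda_i$ runs over $\sigma(L)$, while $|H| \geq 0$ forces all these to be nonnegative, so the largest one comes from the largest $\lambda_i = r$).

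Next I would pass from $|H|$ to $H = (d+d^*)^2$. The matrix $|H|$ is obtained from $H$ by replacing each entry by its absolute value, so entrywise $|H(x,y)| \leq |H|(x,y)$. Since $|H|$ is a non-negative matrix, the standard comparison result for spectral radii (a consequence of Wielandt's theorem, which the abstract already cites in this exact form — ``$\rho$ of $|H|$ which by Wielandt is an upper bound for the spectral radius $\rho$ of $H$'') gives $\rho(H) \leq \rho(|H|)$. Combining this with $\rho(|H|) = r - 1/r$ from the previous paragraph yields $\rho(H) \leq r - 1/r$, which is the claim.

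The only mild subtlety — and the step I would flag as the main thing to get right — is the claim that $\rho(|H|) = r - 1/r$ rather than merely $\rho(|H|) \leq r - 1/r$. This needs the facts that (i) $\sigma(L^{-1}) = 1/\sigma(L)$, so the eigenvalues of $L - L^{-1}$ really are $\{\lambda - 1/\lambda : \lambda \in \sigma(L)\}$ (valid since $L$ is diagonalizable as a symmetric matrix, or one can argue on generalized eigenspaces), (ii) $|H| \succeq 0$, so every such $\lambda - 1/\lambda \geq 0$, and (iii) $t \mapsto t - 1/t$ is monotone increasing on $(0,\infty)$ and the Perron eigenvalue $r$ is the largest element of $\sigma(L)$ in absolute value, hence (given the eigenvalues of $L$ are real and the negative ones $\lambda$ satisfy $|\lambda| < r$ by simplicity of the Perron root when $G$ is connected) $r$ is the largest element of $\sigma(L)$, so $r - 1/r$ dominates. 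Actually for the corollary as stated one only needs the inequality $\rho(H) \le r - 1/r$, so even the cruder bound $\rho(|H|) = \max_{\lambda \in \sigma(L)} (\lambda - 1/\lambda) \le r - 1/r$ suffices; I would present the sharp identity as a remark. If the corollary is meant for general (not necessarily connected) $G$, I would reduce to connected components, noting $\rho$ and the relation $|H| = L - L^{-1}$ are both compatible with the block decomposition.
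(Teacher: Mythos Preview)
Your proposal is correct and follows essentially the same route as the paper: derive $\rho(|H|) = r - 1/r$ from the hydrogen relation, then invoke Wielandt's domination theorem to pass from $|H|$ to $H$. You are in fact more careful than the paper in justifying the equality $\rho(|H|) = r - 1/r$ via Perron--Frobenius and the monotonicity of $t \mapsto t - 1/t$; the paper simply asserts it.
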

\begin{proof}
From the hydrogen relation, we only get the spectral radius $\rho(|H|) = r-1/r$. 
But as $|H|$ is a dominating matrix for $H$, a result of Wielandt 
(\cite{MincNonnegative}, Chapter 2.2, Theorem 2.1),
assures that the maximal eigenvalue of $H$ is smaller or equal than the maximal
eigenvalue of $|H|$.
\end{proof}

\paragraph{}
Since $L=1+A$ with adjacency matrix $A$ for $G'$, we can estimate $\rho$ in terms of the eigenvalues of $A$.
$L$ is a non-negative matrix as it only has entries $0$ or $1$.
If $G$ is a Barycentric refinement of a complex, we can estimate the 
largest eigenvalue of $L$ as the maximal row sum. In the case of a refined graph
where every edge is connected to a vertex with only 2 neighbors, we get the
two vertex degree $d+2$, where $d$ is the maximal vertex degree of $G$. Therefore, 
the connection Laplacian spectral radius can be estimated from above by $d+3$. 
If $G$ is a Barycentric refinement we have then
$$  \rho \leq (d+3) - 1/(d+3) \; . $$
We can do better by incorporating also neighboring
vertex degrees. Still, already here, the estimate in \cite{AndersonMorely1985} is better,
as the maximal row sum in the 1-form Laplacian is then $2+(d_x-1)+1=d_x+2$.
The Anderson-Morely estimates gives for Barycentric refined graphs
$$  \rho \leq d+2 \; . $$

\paragraph{}
{\bf Examples:} \\
{\bf 1)} For a star graph $S(n)$, where a single vertex has degree $n$, the
spectral radius of $H$ is $n+1$. For $n=4$, where the $S(4)$ has spectral
radius $5$, the Barycentric refinement has spectral radius $5.30278$. 
The estimate $d+3 - 1/(d+3) = 7-1/7=6.85714$ is less effective than the estimate
using neighboring vertex degrees. \\
{\bf 2)} For the circular graph $C(2n)$ the spectral radius is $4$ for all $n$. 
As $2n$ is even, they are Barycentric refinements. The above estimate gives
$5-1/5=4.8$ independent of $n$. This is a regular graph for which the estimate 
in \cite{LiShiuChan2010} does not apply. \\
{\bf 3)} For a linear graph $L(2n+1)$ we always get the estimate $5-1/5$, the same
estimate as for circular graph. The spectral radius is $3$ for $n=1$ and 
increases monotonically to $4$ for $n \to \infty$. For $n=1$, the 
dual vertex spectral estimate gives $3.75$ which is better than the
Brualdi-Hoffmann-Stanley estimate 3.4314 or \cite{LiShiuChan2010} which gives $3.9333$.
However, for larger $n>1$, the estimate \cite{LiShiuChan2010} is the best.  
There are estimates $2d-2/((2l+1) n)$ (\cite{Shi2007} Theorem 3.5)
or  $2d-1/(l n)$ (\cite{LiShiuChan2010} Theorem 2.3) for irregular graphs of diameter
$l$ and $n$ vertices. We see that there are cases, where the estimate does not
even beat the trivial estimate $2d$.   \\
{\bf 4)} For the complete graphs $K_2,K_3$, the global estimates 
\cite{BrualdiHoffmann,Stanley1987} are best. 

\begin{comment}
L[s_]:=Module[{v=VertexList[s],e=EdgeList[s],n,G,L},b[x_]:={x};
  p[x_]:={x[[1]],x[[2]]};G=Union[Map[p,e],Map[b,v]];n=Length[G];
  Table[If[DisjointQ[G[[k]],G[[l]]],0,1],{k,n},{l,n}]];
sprime[s_]:=AdjacencyGraph[L[s]-IdentityMatrix[Length[L[s]]]];
Rho[s_] :=Max[Eigenvalues[1.0     Normal[KirchhoffMatrix[s]]]];
Rhoa[s_]:=Max[Eigenvalues[1.0 Abs[Normal[KirchhoffMatrix[s]]]]];
VD=VertexDegree; (* next comes Shi and Li-Shiu-Chang estimates*)
ShLiShCh[s_]:=Module[{R=GraphDiameter[s],g=VD[s],d,v},
 d=Max[g]; v=Length[VertexList[s]];  N[2d-1/(v (2 R+1))]];
StBrHo[s_]:=Module[{e,sp=sprime[s]},(*Stanley-Brualdi-Hoffm*)
  e=Length[EdgeList[sp]]; u=1+N[(Sqrt[1+8 e]-1)/2]; u-1/u];
DegTwo[s_]:=Module[{e=EdgeList[s],v=VertexList[s]},
 d[k_]:=1+VD[s,v[[e[[k,1]]]]]+VD[s,v[[e[[k,2]]]]];
 r=Max[Table[d[k],{k,Length[e]}]]; N[r-1/r]];
Deg[s_]:=Max[VD[s]];  Oli[s_]:=N[Deg[s]+3-1/(Deg[s]+3)]; 
F[s_]:={Rho[s],Rhoa[s],Oli[s],DegTwo[s],StBrHo[s],ShLiShCh[s]};
Table[F[StarGraph[k]],{k,4,10}]         // MatrixForm
Table[F[CycleGraph[2k]],{k,4,8}]        // MatrixForm
Table[F[GridGraph[{2k+1,1}]],{k,1,5}]   // MatrixForm
\end{comment}

\paragraph{}
We also get:

\begin{coro}[Adjacency estimate] 
For any finite simple graph $G=(V,E)$, the spectral radius estimate $\rho \leq 1+r-1/(1+r)$ holds,
where $r$ is the maximum eigenvalue the adjacency matrix of $G'$. 
\label{dualvertex}
\end{coro}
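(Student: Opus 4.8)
The plan is to read this off the preceding Corollary (Relating Hodge and Adjacency), which bounds the Hodge spectral radius by $\rho \le \rho(L)-1/\rho(L)$, where $\rho(L)$ is the spectral radius of the connection Laplacian $L$; so the only remaining task is to compute $\rho(L)$ in terms of the adjacency spectral radius $r$ of the connection graph $G'$.

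First I would invoke the identity $L={\bf 1}+A$ recorded in the introduction, in which $A$ is the adjacency matrix of $G'$ (equivalently, $L$ is the adjacency matrix of the graph obtained from $G'$ by attaching a loop at every vertex). Since $A$ is a symmetric $0$-$1$ matrix it is non-negative, so by Perron--Frobenius its spectral radius equals its largest eigenvalue $r\ge 0$, and every eigenvalue $\lambda$ of $A$ satisfies $-r\le \lambda\le r$. Adding the identity shifts the whole spectrum by $1$, so the eigenvalues of $L$ are precisely the numbers $1+\lambda$ with $\lambda\in\sigma(A)$, and the largest of these equals $1+r$.

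Next I would pin down $\rho(L)$ itself. Because $L$ is again a $0$-$1$ matrix, hence non-negative, Perron--Frobenius applies to $L$ as well, so $\rho(L)$ equals the largest eigenvalue of $L$, namely $\rho(L)=1+r$. (If one prefers a direct check: among the eigenvalues $1+\lambda$ of $L$ the one of largest modulus is $1+r$, since the only competitor is $|1+\lambda_{\min}|\le |1-r|$, and $|1-r|\le 1+r$ whenever $r\ge 0$.) Substituting $\rho(L)=1+r$ into $\rho\le \rho(L)-1/\rho(L)$ yields $\rho\le (1+r)-1/(1+r)$, which is the claim. It is worth noting that, since $t\mapsto t-1/t$ is increasing on $(0,\infty)$, the number $r$ here may be replaced by \emph{any} upper bound for the adjacency spectral radius of $G'$; this is exactly the mechanism by which the path-counting bounds $r\le r_k=1+(P(k))^{1/k}$ announced in the abstract yield effective estimates for $\rho$.

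The one point needing a moment's care --- and the closest thing to an obstacle --- is the claim $\rho(L)=1+r$ rather than the a priori conceivable larger value $|1-r|$, which would be relevant if the most negative eigenvalue of $A$ dominated in modulus; this is precisely where non-negativity of $L$ (equivalently of $A$) is used, and it is the reason the bound is stated with $1+r$. Degenerate instances --- for example $G$ a single vertex, where $G'$ is a single looped point, $r=0$, $\rho(L)=1$ and $\rho=0$ --- satisfy the inequality (here with equality), so no separate bookkeeping is required.
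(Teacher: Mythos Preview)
Your proof is correct and is exactly the argument the paper has in mind: the paper's own proof is the single sentence ``This follows immediately from the hydrogen relation,'' and you have simply unpacked that sentence by invoking the preceding corollary $\rho\le\rho(L)-1/\rho(L)$ and then computing $\rho(L)=1+r$ from $L=\mathbf{1}+A$. Your extra care about why the largest-modulus eigenvalue of $L$ is $1+r$ rather than $|1+\lambda_{\min}|$ is a welcome clarification the paper omits.
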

\begin{proof}
This follows immediately from the hydrogen relation. 
\end{proof} 

\paragraph{}
The simplest estimate is when $r$ is the maximum of $d_x+d_y$ over all pairs $(x,y) \in E$. 
This is a bit higher than the Anderson-Morely estimate \cite{AndersonMorely1985},
who gave $\rho \leq r$ with $r={\rm max}_{(x,y) \in E} (d_x +d_y)$. Their 
proof of 1985 which only  used the maximal row sum in the 1-form Laplacian. 
(That article acknowledges H.P. McKean for posing the problem and uses that 
$d^* d$ and $d d^*$ are essentially isospectral, which is a special case of 
McKean-Singer super symmetry). In \cite{LiZhang1997}, the upper bound 
$2+\sqrt{(d_x-2) (d_y-2)}$ is given and \cite{Zhang2004} state $r= {\rm max} (d_x + \sqrt{d_x m_x})$,
where $m_x$ is the average neighboring degree of $x$. More estimates are given in \cite{FengLiZhang}. 
We can improve such bound by looking at paths of length $2$. But such
estimates are not pretty. The statement allows for any estimate on the maximal eigenvalue of the
adjacency matrix $A$ of $G'$ gives and a spectral estimate of the Laplacian $H_0$. 
Better is the estimate $r \leq 1+{\rm max}_i (1/r_i) \sum_j A_{ij} r_j$, where 
$r_j$ is the row sum of the adjacency matrix which is bounded above by $d_x+d_y$.

\paragraph{}
Global estimates are still often better in highly saturated graphs, where
pairs of vertices where the maximal vertex degree exist. The estimates through $L$ are good 
if the maximum is attained at not-adjacent places. Considerable work was done 
already to improve the trivial upper bound $\rho \leq 2d$ which just estimates the row rums
of $|H|$ and so gives a bound for $\rho(H)$.
In principle, one can get arbitrary close to the spectral radius of $|H|$. 
Let $P(k,x)$ denote the number of paths of length $k$ in the connection graph $G'$ 
starting at $x$. The spectral radius of $L$ can be estimated by 
$r_k = 1+{\rm max}_x P(k,x)$. 

\begin{coro}[Random walk estimate]
For any $k$, $\rho \leq r_k - 1/r_k$. 
\end{coro}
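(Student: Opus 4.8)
The plan is to chain the previous corollary with an elementary estimate on powers of the adjacency matrix of $G'$. By the Corollary ``Relating Hodge and Adjacency'', which rests on the hydrogen relation and Wielandt's theorem, we already know $\rho \leq \rho(L) - 1/\rho(L)$, where $\rho(L)$ is the spectral radius of the connection Laplacian. Since $t \mapsto t - 1/t$ has derivative $1 + t^{-2} > 0$ and is therefore strictly increasing on $(0,\infty)$, it suffices to prove, for each fixed $k$, the bound $\rho(L) \leq r_k$ with $r_k = 1 + P(k)^{1/k}$; the assertion $\rho \leq r_k - 1/r_k$ then follows by monotonicity, noting that $\rho(L) \geq 1$ and $r_k \geq 1$, so both quantities stay in the increasing regime. (For $k=1$ this specializes to the maximal row-sum estimate $r_1 = 1 + \max_x P(1,x)$ already used above.)

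To bound $\rho(L)$, write $L = {\bf 1} + A$, where $A$ is the $0$--$1$ adjacency matrix of the connection graph $G'$. Since $A$ is entrywise non-negative, Perron--Frobenius gives $\rho(L) = 1 + \rho(A)$. For any $k$ we have $\rho(A)^k = \rho(A^k)$, and for a non-negative matrix the spectral radius is at most the largest row sum, so $\rho(A^k) \leq \max_x \sum_y (A^k)_{xy}$. The combinatorial point is that $(A^k)_{xy}$ counts the walks of length $k$ from $x$ to $y$ in $G'$, so the $x$-th row sum of $A^k$ is exactly $P(k,x)$, the number of paths of length $k$ in $G'$ starting at $x$. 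Hence $\rho(A)^k \leq \max_x P(k,x) = P(k)$, i.e.\ $\rho(A) \leq P(k)^{1/k}$ and therefore $\rho(L) \leq 1 + P(k)^{1/k} = r_k$.

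Putting the two parts together yields $\rho \leq \rho(L) - 1/\rho(L) \leq r_k - 1/r_k$. There is no real obstacle in the argument; the only point needing care is that ``number of paths of length $k$'' must be read as the number of length-$k$ walks (repeated vertices allowed), which is what makes the row-sum identity $\sum_y (A^k)_{xy} = P(k,x)$ exact. It is worth recording that, by Fekete's subadditivity lemma applied to the row-sum norm, $P(k)^{1/k}$ is non-increasing and converges to $\rho(A)$ as $k \to \infty$, so $r_k - 1/r_k$ decreases to $\rho(L) - 1/\rho(L) = \rho(|H|)$, which by the hydrogen relation is the spectral radius of the sign-less Hodge Laplacian; this recovers the limiting statement announced in the abstract.
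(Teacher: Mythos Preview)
Your argument is correct and matches the paper's intended approach (the paper states the corollary without a formal proof block, relying on the surrounding discussion): bound $\rho(L)=1+\rho(A)$ by $1+P(k)^{1/k}$ via the row-sum estimate on $A^k$, then feed this into the previous corollary using the monotonicity of $t\mapsto t-1/t$ on $(0,\infty)$. One small slip in your closing remark: Fekete's lemma gives $P(k)^{1/k}\to\inf_k P(k)^{1/k}=\rho(A)$ but does not force the sequence $P(k)^{1/k}$ to be non-increasing; this side comment, however, plays no role in the proof of the corollary itself.
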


\paragraph{}
One can so estimate the spectral radius of $|H|$ arbitrary well. To make it effective
and geometric, one would have to estimate $P(k,x)$ in terms of the local geometry of the graph. 
In the bipartite case, these estimates become sharp for $H$ even at least in the limit
$k \to \infty$. In any case, we see that we can estimate the spectral radius
of a graph Laplacian of a graph $G$ dynamically by the growth rate of random walks 
in a related connection graph $G'$. It would be nice to exploit this
for Erd\"os-Renyi graphs. It is reasonable as the upper bound estimates depend on the clustering of
a large number of large vertex degrees, leading to a high number $P(k,x)$ of paths
of length $k$ starting from a point $x$. 

\paragraph{}
If $G$ is connected, the maximal eigenvalue of the sign-less Kirchhoff matrix
$|H_0|$ is unique. The reason is that $|H_0|$ is a non-negative matrix which 
for some power $m$ satisfies is a positive matrix $|H_0|^m$ having 
a unique maximal eigenvalue by the Perron-Frobenius theorem. The classical
Kirchhoff matrix $H_0$ 
itself can have multiple maximal eigenvalues. For the complete graph $K_n$ for example,
the maximal eigenvalue $n$ of $H_0$ appears with multiplicity $n-1$, while
$|H_0|$ has only a single maximal eigenvalue $2n-2$ and a single eigenvalue $0$.
All other $n-1$ eigenvalues are equal to $n-2$. % Simplify[(n-1)(n-2) + 2n-2 == n(n-1)]

\begin{center} \begin{figure}
\scalebox{0.92}{\includegraphics{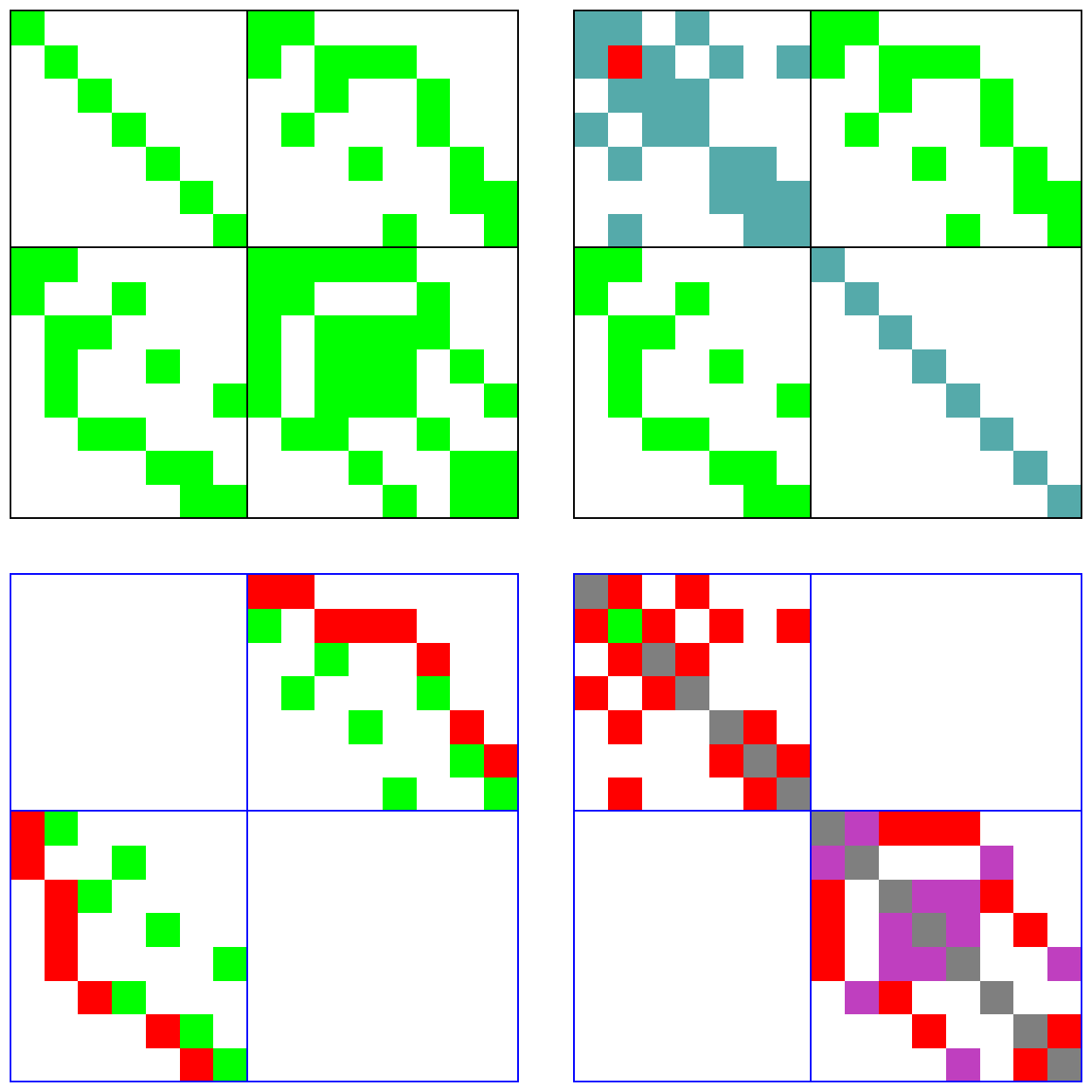}} \\
\caption{
\label{matrices}
We see the connection Laplacian $L$, its inverse $g=L^{-1}$, 
the Dirac operator $D=d+d^*$ and the Hodge Laplacian $H=D^2$
for the Barycentric refined figure $8$-graph. The hydrogen
identity is $|H|=L-g$. In this case we know that $H$ is isospectral
to $L-g$. 
}
\end{figure} \end{center}

\paragraph{}
Here are the matrices written out. 
\begin{tiny}
$$ L = \left[
\begin{array}{ccccccccccccccc}
 1 & 0 & 0 & 0 & 0 & 0 & 0 & 1 & 1 & 0 & 0 & 0 & 0 & 0 & 0 \\
 0 & 1 & 0 & 0 & 0 & 0 & 0 & 1 & 0 & 1 & 1 & 1 & 0 & 0 & 0 \\
 0 & 0 & 1 & 0 & 0 & 0 & 0 & 0 & 0 & 1 & 0 & 0 & 1 & 0 & 0 \\
 0 & 0 & 0 & 1 & 0 & 0 & 0 & 0 & 1 & 0 & 0 & 0 & 1 & 0 & 0 \\
 0 & 0 & 0 & 0 & 1 & 0 & 0 & 0 & 0 & 0 & 1 & 0 & 0 & 1 & 0 \\
 0 & 0 & 0 & 0 & 0 & 1 & 0 & 0 & 0 & 0 & 0 & 0 & 0 & 1 & 1 \\
 0 & 0 & 0 & 0 & 0 & 0 & 1 & 0 & 0 & 0 & 0 & 1 & 0 & 0 & 1 \\
 1 & 1 & 0 & 0 & 0 & 0 & 0 & 1 & 1 & 1 & 1 & 1 & 0 & 0 & 0 \\
 1 & 0 & 0 & 1 & 0 & 0 & 0 & 1 & 1 & 0 & 0 & 0 & 1 & 0 & 0 \\
 0 & 1 & 1 & 0 & 0 & 0 & 0 & 1 & 0 & 1 & 1 & 1 & 1 & 0 & 0 \\
 0 & 1 & 0 & 0 & 1 & 0 & 0 & 1 & 0 & 1 & 1 & 1 & 0 & 1 & 0 \\
 0 & 1 & 0 & 0 & 0 & 0 & 1 & 1 & 0 & 1 & 1 & 1 & 0 & 0 & 1 \\
 0 & 0 & 1 & 1 & 0 & 0 & 0 & 0 & 1 & 1 & 0 & 0 & 1 & 0 & 0 \\
 0 & 0 & 0 & 0 & 1 & 1 & 0 & 0 & 0 & 0 & 1 & 0 & 0 & 1 & 1 \\
 0 & 0 & 0 & 0 & 0 & 1 & 1 & 0 & 0 & 0 & 0 & 1 & 0 & 1 & 1 \\
\end{array} \right] $$

$$ L^{-1} = \left[
\begin{array}{ccccccccccccccc}
 -1 & -1 & 0 & -1 & 0 & 0 & 0 & 1 & 1 & 0 & 0 & 0 & 0 & 0 & 0 \\
 -1 & -3 & -1 & 0 & -1 & 0 & -1 & 1 & 0 & 1 & 1 & 1 & 0 & 0 & 0 \\
 0 & -1 & -1 & -1 & 0 & 0 & 0 & 0 & 0 & 1 & 0 & 0 & 1 & 0 & 0 \\
 -1 & 0 & -1 & -1 & 0 & 0 & 0 & 0 & 1 & 0 & 0 & 0 & 1 & 0 & 0 \\
 0 & -1 & 0 & 0 & -1 & -1 & 0 & 0 & 0 & 0 & 1 & 0 & 0 & 1 & 0 \\
 0 & 0 & 0 & 0 & -1 & -1 & -1 & 0 & 0 & 0 & 0 & 0 & 0 & 1 & 1 \\
 0 & -1 & 0 & 0 & 0 & -1 & -1 & 0 & 0 & 0 & 0 & 1 & 0 & 0 & 1 \\
 1 & 1 & 0 & 0 & 0 & 0 & 0 & -1 & 0 & 0 & 0 & 0 & 0 & 0 & 0 \\
 1 & 0 & 0 & 1 & 0 & 0 & 0 & 0 & -1 & 0 & 0 & 0 & 0 & 0 & 0 \\
 0 & 1 & 1 & 0 & 0 & 0 & 0 & 0 & 0 & -1 & 0 & 0 & 0 & 0 & 0 \\
 0 & 1 & 0 & 0 & 1 & 0 & 0 & 0 & 0 & 0 & -1 & 0 & 0 & 0 & 0 \\
 0 & 1 & 0 & 0 & 0 & 0 & 1 & 0 & 0 & 0 & 0 & -1 & 0 & 0 & 0 \\
 0 & 0 & 1 & 1 & 0 & 0 & 0 & 0 & 0 & 0 & 0 & 0 & -1 & 0 & 0 \\
 0 & 0 & 0 & 0 & 1 & 1 & 0 & 0 & 0 & 0 & 0 & 0 & 0 & -1 & 0 \\
 0 & 0 & 0 & 0 & 0 & 1 & 1 & 0 & 0 & 0 & 0 & 0 & 0 & 0 & -1 \\
\end{array} \right] $$

$$ D = \left[
\begin{array}{ccccccccccccccc}
 0 & 0 & 0 & 0 & 0 & 0 & 0 & -1 & -1 & 0 & 0 & 0 & 0 & 0 & 0 \\
 0 & 0 & 0 & 0 & 0 & 0 & 0 & 1 & 0 & -1 & -1 & -1 & 0 & 0 & 0 \\
 0 & 0 & 0 & 0 & 0 & 0 & 0 & 0 & 0 & 1 & 0 & 0 & -1 & 0 & 0 \\
 0 & 0 & 0 & 0 & 0 & 0 & 0 & 0 & 1 & 0 & 0 & 0 & 1 & 0 & 0 \\
 0 & 0 & 0 & 0 & 0 & 0 & 0 & 0 & 0 & 0 & 1 & 0 & 0 & -1 & 0 \\
 0 & 0 & 0 & 0 & 0 & 0 & 0 & 0 & 0 & 0 & 0 & 0 & 0 & 1 & -1 \\
 0 & 0 & 0 & 0 & 0 & 0 & 0 & 0 & 0 & 0 & 0 & 1 & 0 & 0 & 1 \\
 -1 & 1 & 0 & 0 & 0 & 0 & 0 & 0 & 0 & 0 & 0 & 0 & 0 & 0 & 0 \\
 -1 & 0 & 0 & 1 & 0 & 0 & 0 & 0 & 0 & 0 & 0 & 0 & 0 & 0 & 0 \\
 0 & -1 & 1 & 0 & 0 & 0 & 0 & 0 & 0 & 0 & 0 & 0 & 0 & 0 & 0 \\
 0 & -1 & 0 & 0 & 1 & 0 & 0 & 0 & 0 & 0 & 0 & 0 & 0 & 0 & 0 \\
 0 & -1 & 0 & 0 & 0 & 0 & 1 & 0 & 0 & 0 & 0 & 0 & 0 & 0 & 0 \\
 0 & 0 & -1 & 1 & 0 & 0 & 0 & 0 & 0 & 0 & 0 & 0 & 0 & 0 & 0 \\
 0 & 0 & 0 & 0 & -1 & 1 & 0 & 0 & 0 & 0 & 0 & 0 & 0 & 0 & 0 \\
 0 & 0 & 0 & 0 & 0 & -1 & 1 & 0 & 0 & 0 & 0 & 0 & 0 & 0 & 0 \\
\end{array}
\right] $$

$$ H = D^2 =  \left[
\begin{array}{ccccccccccccccc}
 2 & -1 & 0 & -1 & 0 & 0 & 0 & 0 & 0 & 0 & 0 & 0 & 0 & 0 & 0 \\
 -1 & 4 & -1 & 0 & -1 & 0 & -1 & 0 & 0 & 0 & 0 & 0 & 0 & 0 & 0 \\
 0 & -1 & 2 & -1 & 0 & 0 & 0 & 0 & 0 & 0 & 0 & 0 & 0 & 0 & 0 \\
 -1 & 0 & -1 & 2 & 0 & 0 & 0 & 0 & 0 & 0 & 0 & 0 & 0 & 0 & 0 \\
 0 & -1 & 0 & 0 & 2 & -1 & 0 & 0 & 0 & 0 & 0 & 0 & 0 & 0 & 0 \\
 0 & 0 & 0 & 0 & -1 & 2 & -1 & 0 & 0 & 0 & 0 & 0 & 0 & 0 & 0 \\
 0 & -1 & 0 & 0 & 0 & -1 & 2 & 0 & 0 & 0 & 0 & 0 & 0 & 0 & 0 \\
 0 & 0 & 0 & 0 & 0 & 0 & 0 & 2 & 1 & -1 & -1 & -1 & 0 & 0 & 0 \\
 0 & 0 & 0 & 0 & 0 & 0 & 0 & 1 & 2 & 0 & 0 & 0 & 1 & 0 & 0 \\
 0 & 0 & 0 & 0 & 0 & 0 & 0 & -1 & 0 & 2 & 1 & 1 & -1 & 0 & 0 \\
 0 & 0 & 0 & 0 & 0 & 0 & 0 & -1 & 0 & 1 & 2 & 1 & 0 & -1 & 0 \\
 0 & 0 & 0 & 0 & 0 & 0 & 0 & -1 & 0 & 1 & 1 & 2 & 0 & 0 & 1 \\
 0 & 0 & 0 & 0 & 0 & 0 & 0 & 0 & 1 & -1 & 0 & 0 & 2 & 0 & 0 \\
 0 & 0 & 0 & 0 & 0 & 0 & 0 & 0 & 0 & 0 & -1 & 0 & 0 & 2 & -1 \\
 0 & 0 & 0 & 0 & 0 & 0 & 0 & 0 & 0 & 0 & 0 & 1 & 0 & -1 & 2 \\
\end{array} \right] $$

$$ |H|=L-L^{-1} =  \left[ \begin{array}{ccccccccccccccc}
                   2 & 1 & 0 & 1 & 0 & 0 & 0 & 0 & 0 & 0 & 0 & 0 & 0 & 0 & 0 \\
                   1 & 4 & 1 & 0 & 1 & 0 & 1 & 0 & 0 & 0 & 0 & 0 & 0 & 0 & 0 \\
                   0 & 1 & 2 & 1 & 0 & 0 & 0 & 0 & 0 & 0 & 0 & 0 & 0 & 0 & 0 \\
                   1 & 0 & 1 & 2 & 0 & 0 & 0 & 0 & 0 & 0 & 0 & 0 & 0 & 0 & 0 \\
                   0 & 1 & 0 & 0 & 2 & 1 & 0 & 0 & 0 & 0 & 0 & 0 & 0 & 0 & 0 \\
                   0 & 0 & 0 & 0 & 1 & 2 & 1 & 0 & 0 & 0 & 0 & 0 & 0 & 0 & 0 \\
                   0 & 1 & 0 & 0 & 0 & 1 & 2 & 0 & 0 & 0 & 0 & 0 & 0 & 0 & 0 \\
                   0 & 0 & 0 & 0 & 0 & 0 & 0 & 2 & 1 & 0 & 0 & 0 & 1 & 0 & 0 \\
                   0 & 0 & 0 & 0 & 0 & 0 & 0 & 1 & 2 & 1 & 1 & 1 & 0 & 0 & 0 \\
                   0 & 0 & 0 & 0 & 0 & 0 & 0 & 0 & 1 & 2 & 1 & 1 & 0 & 0 & 1 \\
                   0 & 0 & 0 & 0 & 0 & 0 & 0 & 0 & 1 & 1 & 2 & 1 & 1 & 0 & 0 \\
                   0 & 0 & 0 & 0 & 0 & 0 & 0 & 0 & 1 & 1 & 1 & 2 & 0 & 1 & 0 \\
                   0 & 0 & 0 & 0 & 0 & 0 & 0 & 1 & 0 & 0 & 1 & 0 & 2 & 0 & 0 \\
                   0 & 0 & 0 & 0 & 0 & 0 & 0 & 0 & 0 & 0 & 0 & 1 & 0 & 2 & 1 \\
                   0 & 0 & 0 & 0 & 0 & 0 & 0 & 0 & 0 & 1 & 0 & 0 & 0 & 1 & 2 \\
                  \end{array} \right] $$ 
\end{tiny}

\paragraph{}
There are more relations if $|H|$ is similar to $H$ \cite{ListeningCohomology}. 
In particular, the eigenvalues $0$ of $H$ which belong
to the harmonic 1-forms lead to eigenvalues $-1$ of $L$ and the eigenvalues $0$ of
$H$ which belong to harmonic 0-forms lead to eigenvalues $1$ of $L$ \cite{ListeningCohomology}.

\paragraph{}
The relation could also be useful to analyze the distribution of the Laplacian
eigenvalues, especially in the upper part of the spectrum. 
We can apply the Schur inequality for $L$ to get
$$  \sum_{i=1}^t \lambda_i  \leq t$$
for the ordered list of eigenvalues $\lambda_i$ of $L$. As eigenvalues of $L$ are first negative
this is a suboptimal lower bound for small $t$. But as there is an equality for $t=n$, there will be
a compensation in the upper part. This gives lower bounds on the largest eigenvalues of $L$.
By the way, also the Fiedler inequality giving the lower bound of $d \leq \delta(H) \leq 2 d$
follows directly from Schur. But for $L$ Schur gives have $n-1 \leq \delta(L) \leq n$.

\begin{center} \begin{figure}
\scalebox{0.48}{\includegraphics{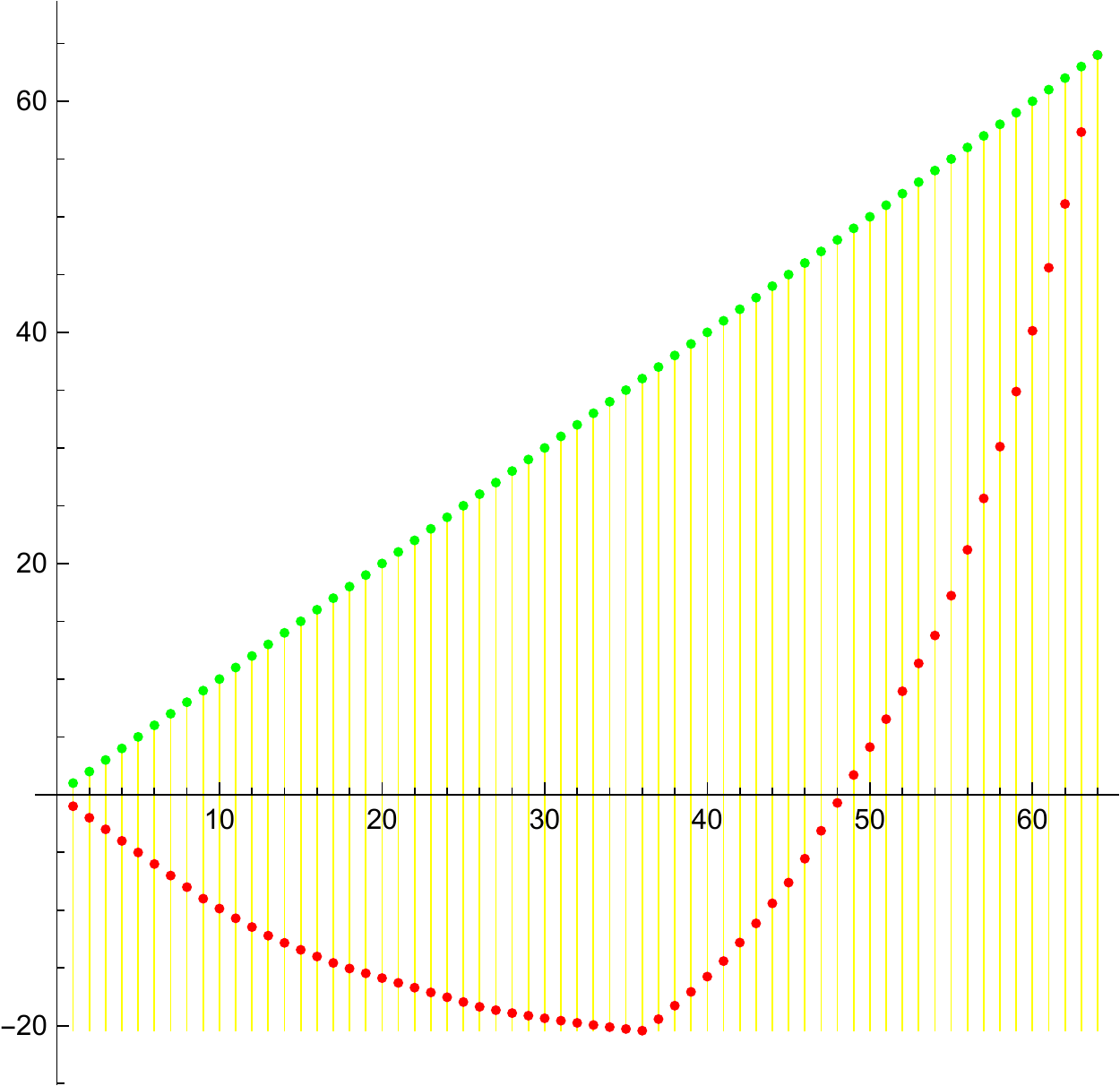}}
\scalebox{0.48}{\includegraphics{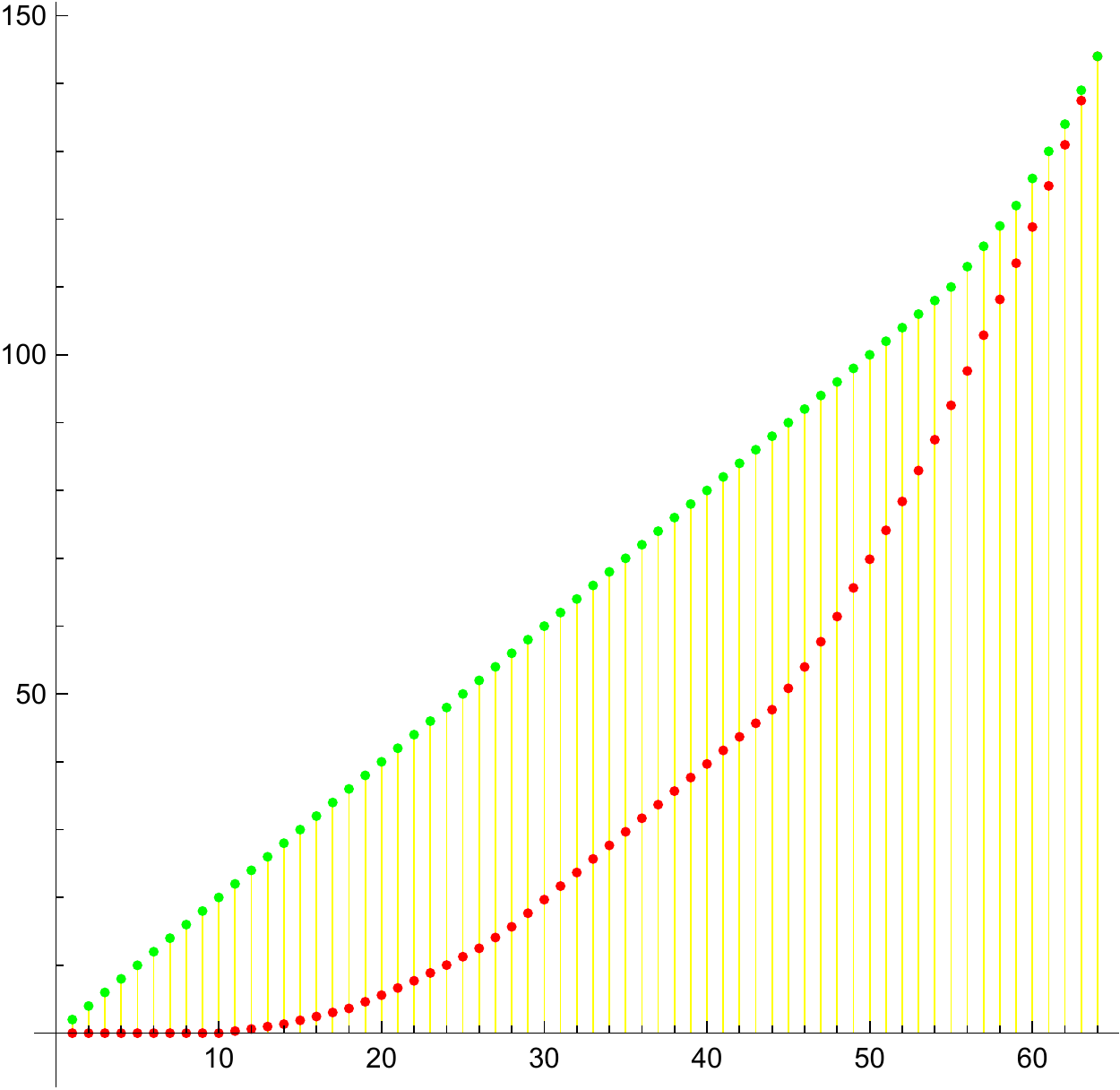}}
\caption{
\label{graphs}
An illustration of the Schur inequality first for $L$ and then for 
$|H|=L-L^{-1}$. The inequality allows for example to estimate the gap 
between the largest $\lambda_n$ and second largest eigenvalue $\lambda_{n-1}$.
The simplest estimate is $\lambda_n(L)-\lambda_{n-1}(L) \geq 1$. 
For $|H|$, Schur gives the Fiedler inequality $\lambda_n(|H|) \geq d$, 
where $d$ is the largest vertex degree. Better estimates can be obtained by 
taking powers $L^k$. One can the estimate the largest gap 
$\lambda_n(L)-\lambda_{n-1}(L) \geq P(k)^{1/k}$
from the maximal number $P(k)$ of closed paths of length $k$ in $G'$.
}
\end{figure} \end{center}
%  schur/schur.m

\paragraph{}
Here is quadratic relation which illustrates the relation with a random walk: 
we know $||\lambda||_2^2 = \sum_{i=1} \lambda_i^2 = \sum_{i,j} L_{ij}^2={\rm tr}(L^2) = 
\sum_{i,j} L_{ij}$ to hold for any simplicial complex $G$. In the 
$1$-dimensional case, the eigenvalues of $L^2$ are the eigenvalues of $L^{-2}$ so
that the eigenvalues $\lambda$ of $L$ satisfy 
$||\lambda||_2^2 = ||\lambda^{-1}||_2^2={\rm tr}(L^{-2})$. So,
$$  {\rm tr}(|H|^2) = {\rm tr}((L-L^{-1})^2) = 2 {\rm tr}(L^2)-2n 2||\lambda||_2^2  -2n  \; . $$
As $|H|=|H_0| \oplus |H_1|$ and $|H_0|=|d_0|^* |d_0|,H_1=|d_0| |d_0|^*$ are essentially isospectral,
the left hand side is 
$$ 2 {\rm tr}(|H_1|^2)  = \sum_{x} {\rm deg}'(x) = 4 |E'| \; , $$
where ${\rm deg}'(x)$ is the vertex degree of the connection graph of $G$ and $|E'|$ is 
the number of edges in the connection graph $G'$. As in the $1$-dimensional case, the
Euler characteristic satisfies $\chi(G)=\chi(G')$ as $G'$ is homotopic to $G$ (note that $G'$ has
triangles and is no more $1$-dimensional), the
energy theorem tells $\chi(G)=\sum_{i,j} L^{-1}_{i,j}$. 
\begin{comment}
 <<all.m; s=RandomOneGraph[10,0.4]; H=UnsignedHodgeLaplacian[s]; L=ConnectionLaplacian[s]; L-Inverse[L]==H;
 n=Length[L]; c=ConnectionGraph[s]; Print[{Tr[H.H], 2 Tr[L.L]-2n,2 Total[VertexDegree[c]],4 Length[EdgeList[c]]}]
 LI=Inverse[L]; {Total[Flatten[LI]],EulerChi[s],EulerChi[c]}
\end{comment}

\begin{center} \begin{figure}
\scalebox{0.22}{\includegraphics{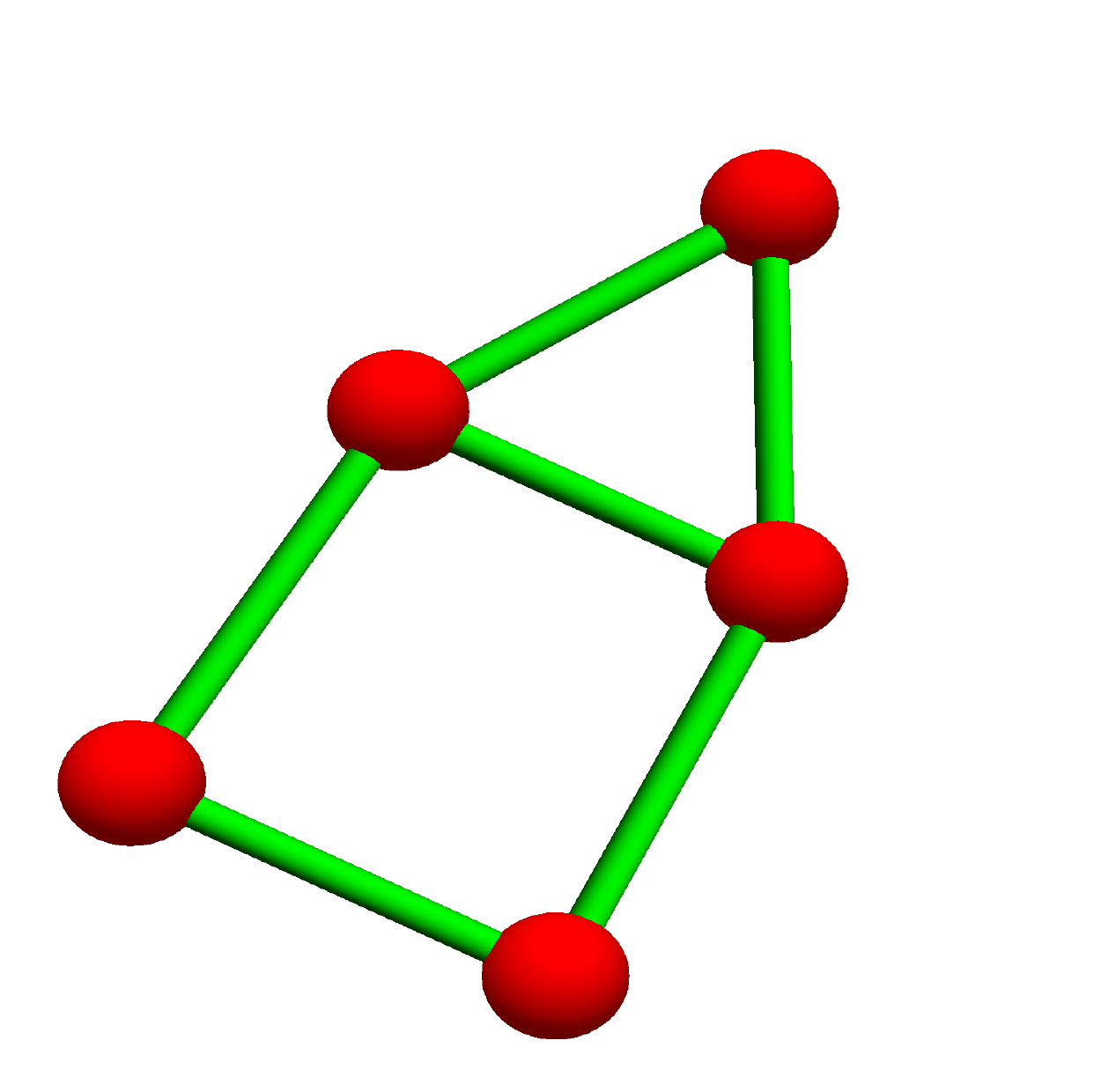}} 
\scalebox{0.22}{\includegraphics{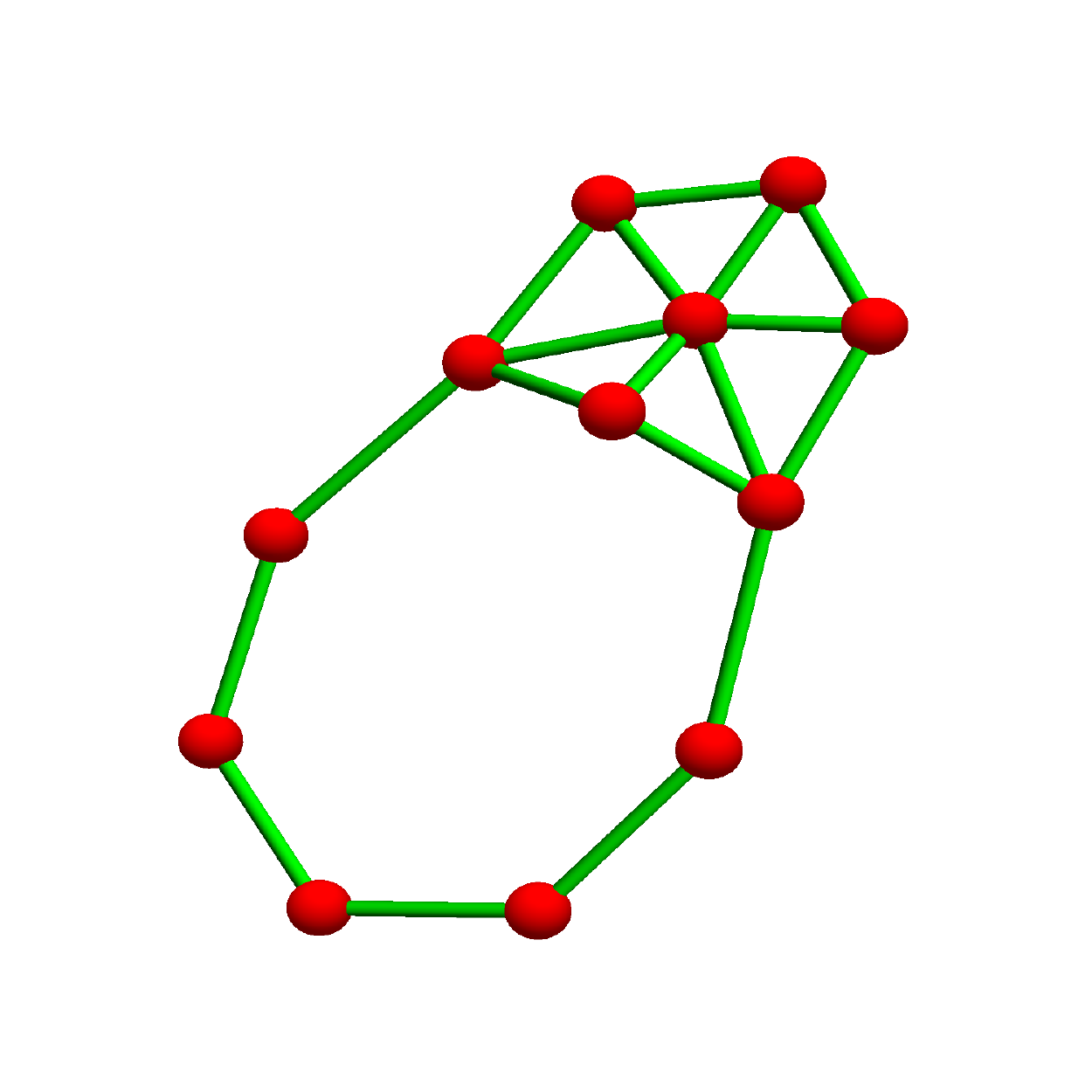}}
\scalebox{0.22}{\includegraphics{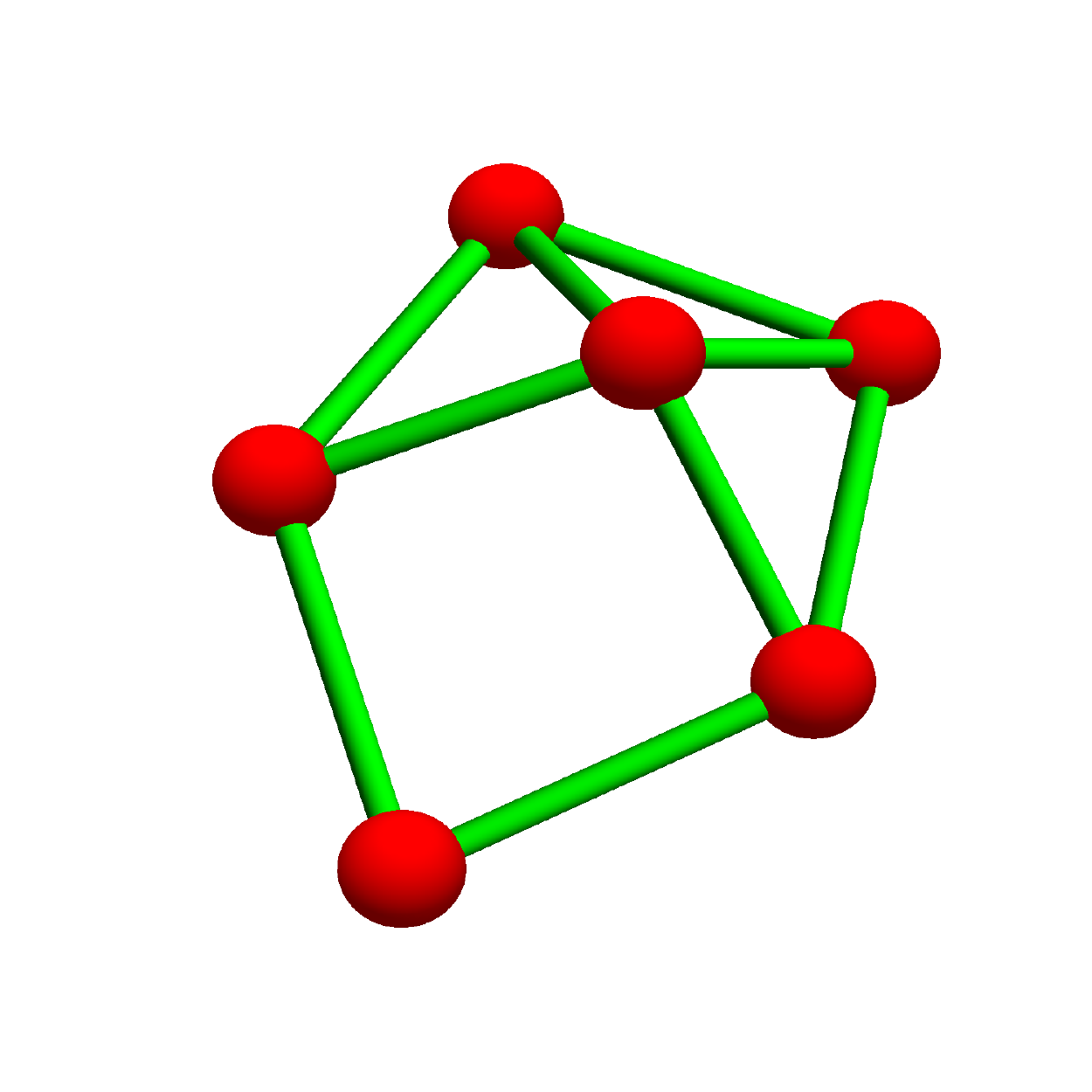}}
\scalebox{0.22}{\includegraphics{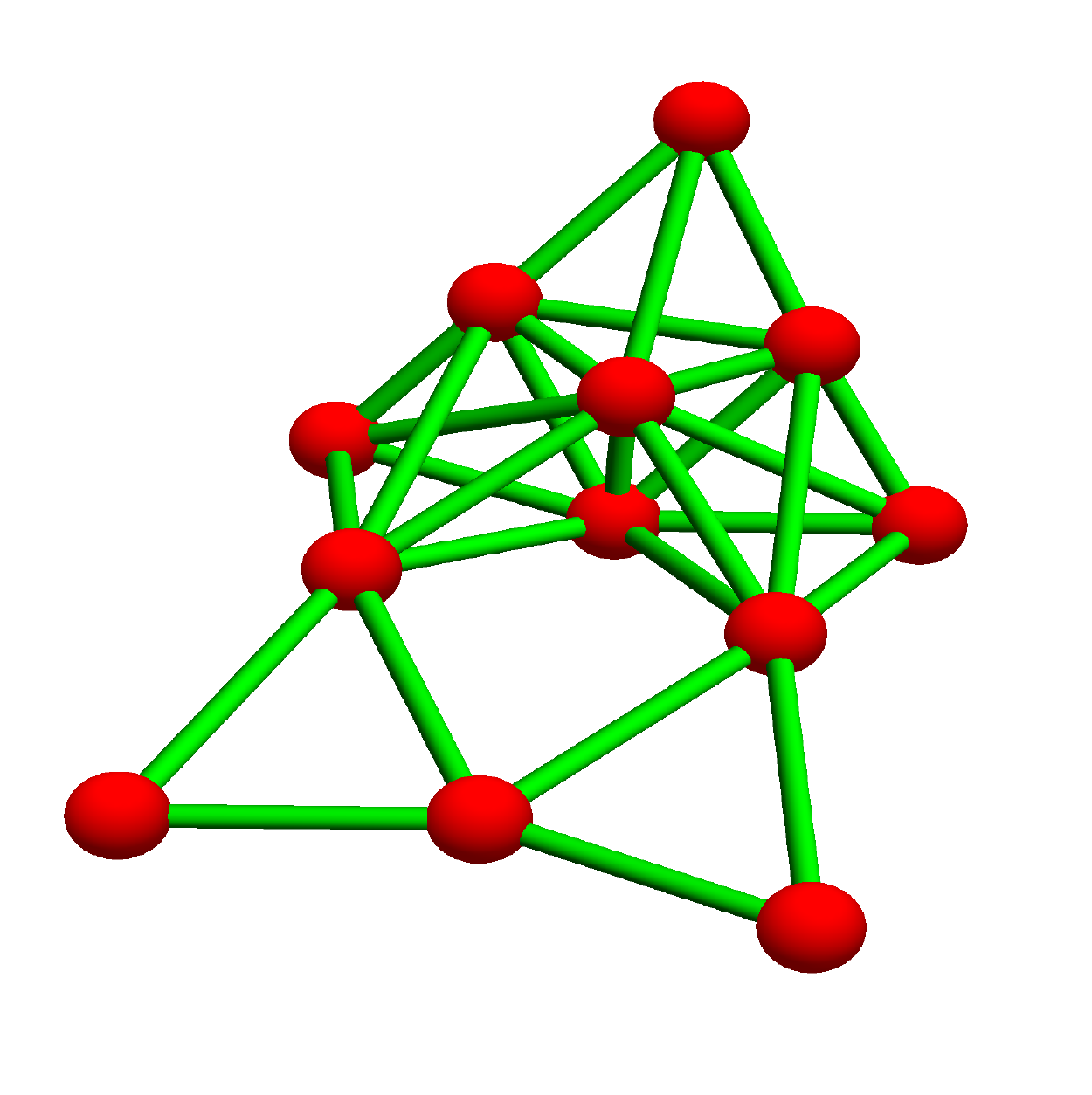}}
\caption{
\label{graphs}
We see the house graph $G$, its Barycentric refinement $G_1$, the line graph $G_L$
and the connection graph $G'$. Both $G_L$ and $G_1$ are subgraphs of $G'$. 
The hydrogen formula relates in full generality for any $G$
the growth of the random walk in the line graph $G_L$
with the growth of the random walk in the connection graph $G'$. 
}
\end{figure} \end{center}
%  matrices/matrices.m

\section{Random walks}

\paragraph{}
Squaring the hydrogen relations $L-L^{-1}=|H|$ gives $L^2-2 + L^{-2} = |H|^2$. We see 
that the sequence of vectors $\psi(n) = L^n \psi$ indexed by $n \in \ZZ$ satisfy 
the in both space and time discrete Laplace recursion relation
$$  \Delta \psi = |H|^2 \psi  \; ,  $$ 
where $\Delta u(n)=u(n+2)-2 u(n) +u(n-2)$ is
a discrete Laplacian. It is really interesting that we get here a {\bf two sided random walk}
$\psi(n)$. This defines a scattering problem as we can look
both at the asymptotic of $\psi(n)$ for $n \to +\infty$
and $n \to -\infty$. The limit $n \to \infty$ is well understood as $L$
is a non-negative matrix which is irreducible. The vectors $\psi(n)/r^n$
converge to the Perron-Frobenius eigenvector, where $r$ is the spectral radius. 
However $\psi(-n)/r^n$ behaves differently as the eigenfunction is different.

\begin{center} \begin{figure}
\scalebox{0.32}{\includegraphics{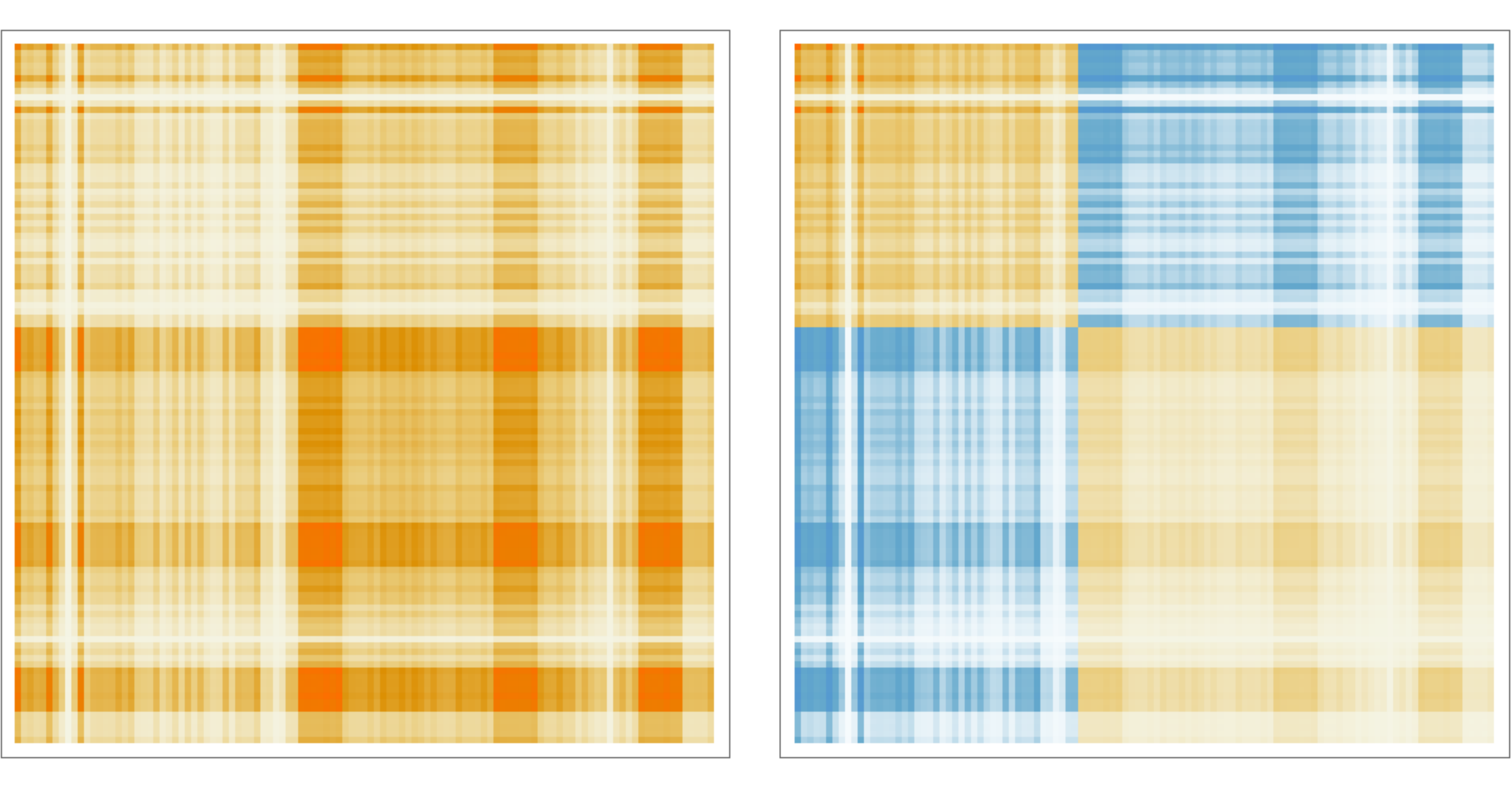}}
\caption{
\label{limiting}
The matrix $L^{2n}/\rho^{2n}$ converges to $P$, the projection of the
Perron-Frobenius operator $v \otimes v$, where $v$ is the eigenvectors
of the largest eigenvalue $\rho(L)$ of $L$. Going backwards, the matrix
$L^{-{2n}}/\rho^{2n}$ however converges onto the projection $w \otimes w$,
where $w$ is the eigenvector of $1/\rho$ of $L$. Unlike the Perron-Frobenius operator
$v$ which is in the positive quadrant, the eigenvector $w$ takes different signs. 
}
\end{figure} \end{center}

\paragraph{}
There is some affinity with a transfer matrix $A$ for 1-dimensional
Schr\"odinger equation $u(n+1) - u(n) + u(n-1) = V(n) u(n)$, where $V(n)$
is a $m \times m$ matrix. 
The model is there also known as a discrete Schr\"odinger operator on the strip
and used to capture features of a two-dimensional discrete operator using methods
from one dimensions. There, the $2m \times 2m$ matrix $A$ is symplectic
(meaning $A^T J A = J$ with the standard symplectic matrix $J$ satisfying $J^2=-1$
and $J^T=-J$). The matrix $L^2$ however is not symplectic in general as its size is 
not necessarily even. While any symplectic matrix $S$ has a block structure 
$$     S      = \left[ \begin{array}{cc} A & B \\ C & D \\ \end{array} \right], 
       S^{-1} = \left[ \begin{array}{cc} D^T & -B^T  \\ -C^T & A^T \end{array} \right] \; , $$
the connection Laplacian $L$ in the circular case satisfies
$$     L      = \left[ \begin{array}{cc} A & B \\ C & D \\ \end{array} \right],
       L^{-1} = \left[ \begin{array}{cc} -D & B \\ C & -A \\ \end{array} \right]    \; . $$
Still, in general, for $1$-dimensional complexes, we have the spectral symmetry 
$\sigma(L^2) = \sigma(L^{-2})$ \cite{DyadicRiemann} 
which is shared by symplectic matrices. This motivates
to investigate whether $L^2$ is conjugated to a symplectic matrix. 

\paragraph{}
A symplectic matrix always has an even size and determinant $1$. It has enjoys 
the spectral symmetry $\lambda \leftrightarrow 1/\lambda$, we know to happen 
for $L^2$. The connection Laplacian $L$ has determinant $-1,1$. If the size of $L$ 
is even, then $L^2$ has a chance to be symplectic because $L^2$ always has determinant 
$1$ and the evenness obstacle is removed. Indeed: 

\begin{propo}[Symplectic Kirby connection]
If $G$ is a graph with $n$ vertices and $m$ edges 
and $n+m$ is even, then $L^2$ is similar to a symplectic matrix. 
\end{propo}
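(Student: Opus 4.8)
The plan is to reduce the statement to the classical description (the theorem of Kirby) of which similarity classes in $GL_N(\RR)$ contain a symplectic matrix: a matrix $M\in GL_N(\RR)$ is similar to a symplectic matrix if and only if (i) $N$ is even, (ii) $M$ is similar to $M^{-1}$, and (iii) for each $\varepsilon\in\{+1,-1\}$ the number of elementary divisors of $M$ of the form $(t-\varepsilon)^d$ with $d$ odd is even. I would apply this with $M=L^2$. First I would record the structural facts that do the work: $L$ is a symmetric $0$--$1$ matrix, namely $L=I+A$ where $A$ is the adjacency matrix of the connection graph $G'$; $L$ is unimodular; and $L$ has size $N=n+m$, the number of simplices of $G$. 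Hence $L^2$ is real symmetric and positive definite, $\det(L^2)=1$, and by hypothesis $N=n+m$ is even (equivalently $\chi(G)=n-m$ is even), so (i) holds.

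For (ii) I would invoke the spectral symmetry $\sigma(L^2)=\sigma(L^{-2})$, valid for every $1$-dimensional complex \cite{DyadicRiemann}, read as an equality of spectra with multiplicities. Since $L^2$ and $L^{-2}$ are both symmetric, hence diagonalizable, equality of spectra with multiplicities forces $L^2$ to be similar to $L^{-2}=(L^2)^{-1}$, which is (ii).

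For (iii) I would use positive definiteness of $L^2$: the only eigenvalue that the involution $\mu\mapsto 1/\mu$ can fix is $\mu=1$ (the value $-1$ is not an eigenvalue), and since $L^2$ is diagonalizable every elementary divisor at $1$ is $(t-1)^1$, of odd degree, while at $-1$ there are none. So (iii) reduces to the single assertion that the eigenvalue $1$ of $L^2$ has even algebraic multiplicity, and this is exactly where the parity hypothesis is used: grouping the $N$ eigenvalues of $L^2$ (with multiplicity) into the orbits of $\mu\mapsto 1/\mu$, each orbit $\{\mu,1/\mu\}$ with $\mu\neq 1$ contributes an even number, so the multiplicity of $1$ is congruent to $N=n+m\equiv 0\pmod 2$. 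With (i), (ii) and (iii) verified, Kirby's theorem yields that $L^2$ is similar to a symplectic matrix.

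I expect the only genuine obstacle to be expository: stating Kirby's theorem in precisely the form above and making sure the cited spectral symmetry is used with multiplicities, not merely as a set equality. If one prefers a self-contained argument avoiding the citation, I would build the symplectic form directly from the eigenspace decomposition of $L^2$. Pair the positive eigenvalues of $L^2$ via $\mu\mapsto 1/\mu$; by the spectral symmetry $\dim\ker(L^2-\mu)=\dim\ker(L^2-\mu^{-1})$, and by the count above $\dim\ker(L^2-I)$ is even. On each subspace $\ker(L^2-\mu)\oplus\ker(L^2-\mu^{-1})$ with $\mu\neq 1$ the two summands have equal dimension, $L^2$ acts as $\mathrm{diag}(\mu I,\mu^{-1}I)$, and equipping that subspace with the standard symplectic pairing between the two summands makes $L^2$ symplectic there, since $\mu\cdot\mu^{-1}=1$. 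On $\ker(L^2-I)$, which is even-dimensional, $L^2$ is the identity and preserves any symplectic form. The direct sum of these forms is preserved by $L^2$, and conjugating it to the standard form $J$ exhibits $L^2$ as conjugate to an element of $Sp_N(\RR)$.
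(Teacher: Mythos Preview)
Your argument is correct and follows the same route as the paper: use the spectral symmetry $\sigma(L^2)=\sigma(L^{-2})$ together with Kirby's theorem. The paper is terser, merely observing that the characteristic polynomial of $L^2$ is reciprocal and then citing Kirby (Theorem~A.1 in \cite{Rivin}), whereas you spell out Kirby's hypotheses and verify them individually, including the evenness of the multiplicity of the eigenvalue $1$, which the paper leaves implicit in the word ``reciprocal''. Your direct construction of an $L^2$-invariant symplectic form from the eigenspace pairing $\ker(L^2-\mu)\oplus\ker(L^2-\mu^{-1})$ is not in the paper and provides a self-contained alternative that bypasses any need to look up the precise form of Kirby's statement.
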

\begin{proof}
The spectral symmetry implies that the characteristic polynomial of $L^2$ is
reciprocal. A theorem of David Kirby (see Theorem A.1 in \cite{Rivin}) implies
that the square $L^2$ of $L$ is similar to a symplectic matrix. 
\end{proof} 

\paragraph{}
The space of solutions of the $1$-dimensional Jacobi equation
$$ \Delta \psi(n)=|H|^2 \psi(n)  $$
with $\Delta u(n)=u(n+2)-2 u(n)+u(n-2)$ is $4n$-dimensional because we
have a second order recursion and two independent lattices. 
Because the sign-less Hodge operator $|H|$ has non-negative spectrum, 
it is not so much a ``discrete wave equation" as a ``discrete Laplace equation". 
It shows the growth for $|n| \to \infty$ as harmonic functions do. 
As the wave equation needs an initial velocity and initial position and 
the Laplacian $\Delta$ has a smallest unit translation $2$, 
it is natural to invoke quaternions: 

\begin{lemma}[Quaternion initial value]
The discrete Laplace equation $\Delta \psi(n,x)=|H|^2 \psi(n,x)$ has a unique solution which is
determined by a  quaternion valued initial field $x \to \psi(0,x) \in \HH$.
\end{lemma}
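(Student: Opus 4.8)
The plan is to turn the Jacobi equation into an explicit linear recursion, isolate a canonical block of $4n$ real initial parameters, and then repackage those parameters as a single $\HH$-valued field on $G$. Squaring the hydrogen identity gives $|H|^{2}=L^{2}-2+L^{-2}$, so $\Delta\psi=|H|^{2}\psi$ becomes $\psi(n+2)+\psi(n-2)=(L^{2}+L^{-2})\psi(n)$. Since $\Delta$ has smallest translation $2$, the even sublattice $2\ZZ$ and the odd sublattice $1+2\ZZ$ decouple, and on each of them this is a second order recursion in the step variable; one solves for $\psi(n\pm2)$ in terms of the other two slices by mere subtraction, so a solution on the even lattice is pinned by $(\psi(0),\psi(2))$ and on the odd one by $(\psi(-1),\psi(1))$, while any choice of these four vectors in $\RR^{G}$ extends uniquely in both directions. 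This is the earlier count that the solution space is $4n$-dimensional, sharpened to a linear isomorphism of that space onto $(\RR^{G})^{4}$, namely $\psi\mapsto(\psi(-1),\psi(0),\psi(1),\psi(2))$; injectivity is the computation just made and surjectivity is the extension by the recursion.

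Then I would identify $(\RR^{G})^{4}\cong\HH^{G}$ through the quaternion basis $1,\mathbf{i},\mathbf{j},\mathbf{k}$, so that the four initial slices become the four components of one quaternion valued field $x\mapsto\psi(0,x)\in\HH$; composing with the previous isomorphism gives existence and uniqueness. To make the quaternionic picture dynamical, and to fit the two sided walk $L^{n}\psi$, I would also realise the evolution directly on $\HH^{G}$: let $\mathcal{T}\colon\HH^{G}\to\HH^{G}$ act as the real matrix $L$ on the $1$-component, as $-L$ on the $\mathbf{i}$-component, as $L^{-1}$ on the $\mathbf{j}$-component and as $-L^{-1}$ on the $\mathbf{k}$-component. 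Then $\mathcal{T}$ is invertible and a one line check gives $\mathcal{T}^{2}+\mathcal{T}^{-2}=2+|H|^{2}$ on every component, whence $\psi(n):=\mathcal{T}^{n}\psi(0)$ satisfies $\psi(n+2)+\psi(n-2)=(2+|H|^{2})\psi(n)$, i.e. $\Delta\psi=|H|^{2}\psi$, and is the unique $\HH^{G}$-valued sequence with the given value at $n=0$ obeying the first order relation $\psi(n+1)=\mathcal{T}\psi(n)$; a dimension count matches this $4n$-dimensional family to the solution space above. Equivalently, one can display the four fundamental solution operators $L^{n},(-L)^{n},L^{-n},(-L)^{-n}$---each obeys the recursion since every $\mu\in\{\pm L,\pm L^{-1}\}$ has $\mu^{2}+\mu^{-2}=L^{2}+L^{-2}$---and write $\psi(n)=L^{n}a+(-L)^{n}b+L^{-n}c+(-L)^{-n}d$ with $(a,b,c,d)\leftrightarrow\psi(0)\in\HH^{G}$.

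The main obstacle is that this last, most symmetric, representation is the one that can degenerate: the operators $L^{n},(-L)^{n},L^{-n},(-L)^{-n}$ are linearly independent as solutions only when $\ker|H|=0$, because $(L-L^{-1})w=0\iff L^{2}w=w$ and, $L$ being symmetric, this forces $w$ into the $(\pm1)$-eigenspaces of $L$; and $\ker|H|$ is nonzero whenever, for instance, $G$ is bipartite, since then $|H_{0}|=|d_{0}|^{*}|d_{0}|$ annihilates the sign function of the bipartition. In that case one must replace the repeated fundamental solutions by Jordan type ones such as $n\,w$ with $Lw=w$ (together with its companion $(-1)^{n}n\,w$). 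This is exactly why I would route existence and uniqueness through the initial slice isomorphism, or through $\mathcal{T}$, both of which are bijections with no case distinction, and treat the spectral fundamental system only as illustration; what then remains is the routine check that the chosen quaternionic bundling is a linear isomorphism, plus the expository remark that ``the $n=0$ slice of a solution'' and ``the quaternion field $\psi(0,x)$'' denote the same datum under this identification.
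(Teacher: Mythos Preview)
Your argument is correct, and at its core it coincides with the paper's: the paper simply exhibits the four families $L^{2n}\psi_0,\ L^{-2n}\psi_1,\ L^{2n+1}\psi_2,\ L^{-2n-1}\psi_3$ (i.e.\ the forward and backward walks $L^{m},L^{-m}$ restricted to the even and odd sublattices) and asserts that both the span of these and the solution space are $4n$-dimensional. Your fundamental system $L^{n},(-L)^{n},L^{-n},(-L)^{-n}$ is just a change of basis of the same set, since on even $n$ one recovers $L^{n},L^{-n}$ and on odd $n$ one recovers $\pm L^{n},\pm L^{-n}$.

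Where you genuinely go beyond the paper is in rigor. The paper's dimension count silently assumes that $L^{m}$ and $L^{-m}$ are independent on each sublattice, and you correctly observe that this fails precisely on $\ker|H|$ (equivalently on the $\pm 1$-eigenspaces of $L$), which is nonempty for bipartite $G$; since Barycentric refinements are always bipartite, the gap is not academic. Your remedy---routing existence and uniqueness through the initial-slice isomorphism $\psi\mapsto(\psi(-1),\psi(0),\psi(1),\psi(2))$, or equivalently through the invertible block operator $\mathcal{T}$---is the right fix and requires no case distinction. The paper's proof is a two-line sketch that presumes generic independence; yours supplies the missing bijectivity argument and explains exactly when and how the naive fundamental system must be repaired with Jordan-type solutions.
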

\begin{proof}
Given a quaternion-valued function $\psi(x)=\psi_0(x) + i \psi_1(x) + j \psi_2(x) + k \psi_3(x)$
on the simplicial complex, we can span the solution space of the (4th order) Jacobi equation
with 
$$L^{2n} \psi_0, L^{-2n} \psi_1, L^{2n+1} \psi_2, L^{-2n-1} \psi_3 \; . $$ 
Both vector spaces are $4n$-dimensional if $n=e+f$ is the number of simplices in the 
$1$-dimensional simplicial complex defined by $G$. 
\end{proof} 

\section{Remarks}

\paragraph{} 
It should be noted that the hydrogen relation $L-L^{-1}=|H|$ holds also
over other fields $F$ rather than the familiar real numbers. 
We can for example look at the relation over the finite field $F=\ZZ_p$ with a prime $p$. 
Because the matrices $L$ and $L^{-1}$ are integer-valued, we just can look at all the numbers 
modulo $p$. Now, the random walk $L^n \psi$ is a cellular automaton in the sense of Hedlund \cite{Hed69}. 
The alphabet is the set of $F$-valued functions from the simplicial complex $G$ to $F$. 
As a random walk, the state $L^n \psi$ is a path integral, summing over all paths of length $n$ 
in the graph $G'$ in which some self loops are attached to each node. 

\begin{propo}[Reversible cellular automaton]
The hydrogen relation $|H|=L-L^{-1}$ still holds over finite fields $F$. The corresponding 
random walk $L^n \psi$ is a reversible cellular automaton over a finite alphabet $F^G$. 
\end{propo}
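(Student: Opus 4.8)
The plan is to deduce both assertions from Theorem 1 together with the integrality and unimodularity of $L$. For the relation itself, note that the identity $L-L^{-1}=|H|$ proved there is, by the explicit matrix entries listed in that proof, an identity among \emph{integer} matrices: $|H|=|d|\,|d|^*+|d|^*|d|$ has nonnegative integer entries and $L,L^{-1}$ have entries in $\ZZ$. Let $\phi\colon\ZZ\to F$ be the unique ring homomorphism ($1\mapsto1_F$) and reduce every entry through $\phi$. The only thing to check is that $\phi(L)$ remains invertible: since $L$ is unimodular, $\det(L)=\pm1$, and $\phi(\pm1)=\pm1_F\ne0_F$ even when ${\rm char}(F)=2$, so $\phi(L)$ is invertible over $F$. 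Because the entrywise reduction is a ring homomorphism on matrices, applying it to $L\,L^{-1}={\bf 1}$ yields $\phi(L)^{-1}=\phi(L^{-1})$, hence $\phi(L)-\phi(L)^{-1}=\phi(|H|)$, which is by definition the sign-less Hodge Laplacian of $G$ over $F$. (Nothing here uses finiteness: the relation persists over any field, indeed any commutative ring; finiteness of $F$ enters only in the next part, to make the state space finite.) Reducing the squared identity $L^2-2\cdot{\bf 1}+L^{-2}=|H|^2$ and the Jacobi recursion $\Delta\psi(n)=|H|^2\psi(n)$ the same way, they too stay valid over $F$.

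For the cellular-automaton statement I would first pin down the update map. Since $L={\bf 1}+A$ with $A$ the adjacency matrix of the connection graph $G'$, the map $T\colon F^G\to F^G$, $(T\psi)(x)=\sum_{y\cap x\ne\emptyset}\psi(y)$, is given by a \emph{local rule}: the new value at a simplex $x$ depends only on the values of $\psi$ on the simplices meeting $x$, i.e.\ on the unit ball of $x$ in $G'$. That is exactly a cellular-automaton rule in the sense of Hedlund on the complex $G$ with cell alphabet $F$; equivalently, $T$ is the generator of a symbolic dynamical system indexed by the time line $\ZZ$ whose (finite) alphabet is $F^G$. Reversibility is now the content of the first part: $T$ is a bijection with $T^{-1}\psi=L^{-1}\psi$, and since $L^{-1}$ is again an integer matrix, $T^{-1}$ is \emph{also} given by a finite-range rule $(T^{-1}\psi)(x)=\sum_y g(x,y)\,\psi(y)$ with $g=L^{-1}$. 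Here I would invoke the explicit Green-function formula $g(x,y)=\omega(x)\,\omega(y)\,\chi({\rm St}(x)\cap{\rm St}(y))$ recalled in the introduction to see that $g(x,y)=0$ whenever ${\rm St}(x)\cap{\rm St}(y)=\emptyset$, so the inverse rule has bounded radius as well. Hence $T$ is a reversible cellular automaton, and the two-sided orbit $n\mapsto L^n\psi$, $n\in\ZZ$, is its trajectory — a reversible path integral summing over paths of length $|n|$ in $G'$ (with the self-loops attached at each vertex).

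To obtain a genuine cellular automaton on an infinite lattice $\ZZ^r$, one replaces some factors of a product complex $G=G_1\times\cdots\times G_r$ by infinite path complexes, and for that step I would need (or first establish) that the connection Laplacian of a product is the tensor product, $L(G_1\times G_2)=L(G_1)\otimes L(G_2)$, so that $L^{-1}$ of a product is the tensor product of inverses and both the forward rule and the backward rule stay of \emph{uniformly} bounded range — exactly what a reversible CA on $(F^{G_{\mathrm{fin}}})^{\ZZ^r}$ with alphabet $F^{G_{\mathrm{fin}}}$ requires. I do not expect a deep obstacle: the arithmetic descent is formal once unimodularity is used, and the only non-formal point is the tensor-product structure of $L^{-1}$ for products, which genuinely uses the combinatorics of the connection Laplacian and not merely $\det L=\pm1$; everything else is bookkeeping about which notion of cellular automaton one adopts.
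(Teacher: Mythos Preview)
The paper does not supply a formal proof of this proposition; the surrounding paragraph simply notes that ``because the matrices $L$ and $L^{-1}$ are integer-valued, we just can look at all the numbers modulo $p$'' and then restates the cellular-automaton interpretation in the Hedlund sense. Your argument is a correct and considerably more careful elaboration of exactly this idea: you make explicit that unimodularity of $L$ (so $\det L=\pm1$ survives in $F$) is what guarantees $\phi(L)$ is invertible and hence that $\phi(L^{-1})=\phi(L)^{-1}$, after which the integer identity $L-L^{-1}=|H|$ descends via the ring homomorphism. That is the substantive step the paper leaves implicit.

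On the cellular-automaton side you also go beyond the paper: the paper asserts locality and reversibility without further justification, whereas you argue that both $L$ and $L^{-1}$ are given by finite-range rules (using the Green-function formula $g(x,y)=\omega(x)\omega(y)\chi({\rm St}(x)\cap{\rm St}(y))$ to see that $L^{-1}$ vanishes outside a bounded neighbourhood). This is correct and is the right thing to check. Your final paragraph about the tensor-product structure $L(G_1\times G_2)=L(G_1)\otimes L(G_2)$ needed for the $\ZZ^r$-lattice extension is accurate and is stated (again without proof) later in the paper in the strong-ring discussion; you are right that this is the one place where something beyond mere reduction modulo $p$ is used.

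In short: same approach as the paper, but you have supplied the details the paper omits.
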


A cellular automaton defines a homeomorphism $T$ on the compact metric space 
$\Omega=A^Z$, where $A=F^G$ is the finite alphabet of all $F$ valued functions on $G$. 
The dynamics $T$ commutes with the shift. This is the point of view of Hedlund \cite{Hed69}. 
By the theorem of Curtis-Hedlund and Lyndon, the shift commuting property is equivalent to the 
existence of a finite rule involving only a finite neighborhood.  
In the theory of cellular automata, one looks at the attractor $\bigcap T^k \Omega$ 
which defines a subshift of finite type.  
Of interest is the structure of invariant measures or whether the automaton is prime. 
(See i.e. \cite{HofKnill} which sees cellular auatomata in the eyes of Hedlund.)

\paragraph{}
In the Barycentric limit, both the spectral measure of $L$ and the spectral measure of
$H$ converge. In one dimension, the limiting operator of $H$ is given by a Jacobi matrix
while $L$ is a Jacobi matrix on a strip. It still has the property $L-L^{-1}=|H|$. 
Because already after one Barycentric refinement, we have a bipartite
graph, the sign-less Laplacian $|H|$ is similar to $H$. The liming density of states of 
both $L$ and $H$ are known. The limiting spectral function for $H$ is $F(x)=4 \sin^2(\pi x/2)$. 
It satisfies $F(2x)=T(F(x))$ with $T(x)=x(4-x)$. In the finite dimensional case, the spectral 
function is defined as $F_n(x) = \lambda_{[n x]}$, where $[n x]$ is the floor function and
the eigenvalues are $\lambda_0, \dots, \lambda_{n-1}$. Since the eigenvalues $\mu_k$ of $L$ 
satisfy $\phi(\mu_k) = \mu_k - 1/\mu_k = \lambda_k$, the spectral function of $L$ is the 
pull-back under $\phi$. 

\begin{propo}
The hydrogen relation $|H|=L-L^{-1}$ holds also in  the Barycentric limit. The operator $L$
as well as its inverse $L^{-1}$ both remain bounded. 
\end{propo}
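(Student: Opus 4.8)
The plan is to combine two facts that hold at each finite stage of the Barycentric refinement tower and then pass to the limit. First, by the Hydrogen relation (Theorem 1), at every finite refinement $G_m$ we have the identity $|H^{(m)}| = L^{(m)} - (L^{(m)})^{-1}$ on the finite-dimensional space $\ell^2(G_m)$; moreover, since $G_1$ is already bipartite, $|H^{(m)}|$ is conjugate to $H^{(m)}$ for all $m \ge 1$, so the relation transports to the signed Hodge Laplacian as well. Second, I would invoke the convergence of spectral measures in the Barycentric limit (stated in the excerpt): the density of states of $L^{(m)}$ and of $H^{(m)}$ converge, and in one dimension the limiting operator of $H$ is a Jacobi matrix while the limit of $L$ is a Jacobi matrix on a strip. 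The eigenvalue correspondence $\phi(\mu) = \mu - 1/\mu = \lambda$ between the spectrum of $L^{(m)}$ and that of $|H^{(m)}|$ is exactly the algebraic shadow of the Hydrogen identity, so the limiting spectral function of $L$ is the $\phi$-pullback of that of $H$, i.e. $F_L = \phi^{-1}\circ F_H$.

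First I would make precise the ambient space for the limit: realize all $L^{(m)}$ as operators on a common Hilbert space (the $\ell^2$-space of the profinite vertex set, or the crossed-product/groupoid picture used in the density-of-states construction), and check that the finite-range, uniformly bounded coefficient structure of the connection Laplacian is preserved under refinement, so that $L^{(m)} \to L$ and $(L^{(m)})^{-1} \to L^{-1}$ strongly (or in the appropriate operator topology in which the density of states is continuous). Then the identity $|H^{(m)}| = L^{(m)} - (L^{(m)})^{-1}$, being a finite-range identity with coefficients bounded uniformly in $m$, passes to the limit termwise, giving $|H| = L - L^{-1}$ for the limiting operators. For the boundedness claim: $L = \mathbf 1 + A$ where $A$ is the adjacency matrix of $G'$, and in the refinement limit the connection graph $G'_m$ has uniformly bounded vertex degrees (the local combinatorics of a $1$-dimensional complex near a simplex stabilizes), so $\|L^{(m)}\| \le 1 + \sup_x \deg'(x)$ is bounded uniformly in $m$; hence $\|L\| < \infty$. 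For $\|L^{-1}\|$, I would use that $|H| = L - L^{-1}$ together with the spectral symmetry $\sigma(L^2) = \sigma(L^{-2})$ (from \cite{DyadicRiemann}), which gives $\|L^{-1}\| = \|L\|$ in the limit, or alternatively note $L^{-1} = L - |H|$ and that $|H|$ is bounded by $2d$ where $d$ is the (uniformly bounded) maximal vertex degree.

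The key steps, in order: (1) fix the common ambient space and the topology in which the Barycentric limits of $L^{(m)}$, $(L^{(m)})^{-1}$, $|H^{(m)}|$ exist; (2) verify uniform boundedness of degrees in $G_m$ and $G'_m$, yielding uniform operator norm bounds on $L^{(m)}$ and $|H^{(m)}|$; (3) pass the finite-stage Hydrogen identity to the limit, using that it is a bounded-coefficient, finite-range relation; (4) conclude $\|L^{-1}\| = \|L - |H|\| < \infty$, or use the $\sigma(L^2)=\sigma(L^{-2})$ symmetry to get $\|L^{-1}\| = \|L\|$.

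The main obstacle is step (1): making rigorous sense of ``the limit operator $L$'' and ensuring $(L^{(m)})^{-1} \to L^{-1}$ in the right sense. Strong convergence of $L^{(m)}$ does not automatically give convergence of inverses unless one has a uniform lower bound on $|L^{(m)}|$ bounded away from $0$ — equivalently a uniform gap in the spectrum of $L^{(m)}$ around $0$, or at least control that the spectral measures do not accumulate mass at $0$. Here the Hydrogen identity helps rather than hinders: $L^{(m)} - (L^{(m)})^{-1} = |H^{(m)}|$ forces the eigenvalues $\mu$ of $L^{(m)}$ to solve $\mu - 1/\mu = \lambda \ge 0$, so $|\mu| \ge 1$ for the positive branch and the eigenvalues stay off a neighborhood of $0$ controlled by $\|L^{(m)}\|$; combined with the uniform norm bound this pins $\sigma(L^{(m)}) \subset [-\|L\|, -c] \cup [1, \|L\|]$ for a uniform $c>0$, which is exactly the uniform invertibility needed to pass inverses to the limit. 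So the plan is to extract this uniform spectral-gap statement from the finite-stage Hydrogen relation first, and only then run the limiting argument.
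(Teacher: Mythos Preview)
The paper does not actually supply a proof of this proposition: it is stated without argument and followed only by the remark ``We have made use of this already when looking at the limiting Zeta function for $L$ \cite{DyadicRiemann}.'' The surrounding text makes clear that the intended Barycentric limit is the concrete operator on $\ell^2(\ZZ)$ displayed in Figure~\ref{freelaplace}, where $|H|$ and $L$ are explicit periodic Jacobi-type matrices; for that object the identity can simply be verified entrywise (the entries of $L$, $L^{-1}$, $|H|$ are all explicit integers), and boundedness is immediate since the matrices are banded with bounded coefficients. So the ``paper's proof'' is essentially a direct verification on the limiting operator, deferred to the companion paper.

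Your proposal instead frames things as a genuine limiting argument: uniform degree bounds in $G_m$ and $G'_m$ to control $\|L^{(m)}\|$ and $\||H^{(m)}|\|$, and then a uniform spectral gap for $L^{(m)}$ to guarantee that inverses pass to the limit. This is a more general and more honest route than what the paper sketches, and the ingredients you isolate are the right ones. Two comments. First, your spectral-gap step can be made cleaner by using the symmetry $\sigma((L^{(m)})^2)=\sigma((L^{(m)})^{-2})$ directly: it forces $\min_k|\mu_k| = 1/\max_k|\mu_k|$, hence $\|(L^{(m)})^{-1}\| = \|L^{(m)}\|$ at every finite stage, which together with the uniform bound on $\|L^{(m)}\|$ gives the uniform invertibility you need without chasing the branches of $\mu-1/\mu=\lambda$. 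Second, you correctly flag step~(1) as the real work; note however that in the one-dimensional case the paper has in mind (iterated subdivision of a circle or path), the limiting operator on $\ell^2(\ZZ)$ is explicit and periodic, so one can sidestep the abstract convergence machinery entirely and just check $|H|=L-L^{-1}$ on the limit object by hand. Your approach buys generality (it would apply to any $1$-dimensional starting graph, where the limit is understood via density of states rather than as a single operator), at the cost of having to set up the ambient space carefully.
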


We have made use of this already when looking at the limiting Zeta function for $L$
\cite{DyadicRiemann}.

\paragraph{}
For $1$-dimensional Schr\"odinger operators \cite{Pastur,Carmona,Cycon}, 
given in the form of general Jacobi matrices $Hu(n) = a(n) u(n+1) + a(n-1) u(n-1) + b(n) u(n)$,
one has hopping terms $a(n)$ attached to directed edges, and terms $b(n)$ 
attached to self-loops. Adjacency matrices of weighted graphs or
Laplacians (which have row summing up to zero) are also called
``discrete elliptic differential operators" \cite{VerdiereGraphSpectra} and
the zero sum case is a ``harmonic Laplacian". The circular graph case has also a ``covering version"
where one looks at the Floquet theory of periodic operators or more generally at
almost periodic or random cases like ``almost Mathieu"
$a(n)=1,b(n)=c \cos(\theta + n \alpha)$, where 
$a(n)=f(T^nx),b(n)=g(T^nx)$ are defined by translations on a compact
topological group. The flat Laplacian $H u(n)=u(n+1)-2 u(n)+u(n-1)$ on $\ZZ$ 
is a bounded operator on $l^2(\ZZ)$ has the spectrum on $[0,4]$.
We usually don't think about this operator as an almost periodic operator, but its nature is
almost periodic if we look at it as a Barycentric refinement limit, where the hydrogen relation
$|H|=L-L^{-1}$ still works. In order to see the Jacobi structure in the cyclic case, we have
to order the complex as $Z=\{ \dots \{-1,0\},\{0\},\{0,1\}, \{1\}, \{1,2\} \dots \}$
as seen in Figure~(\ref{freelaplace}). The hydrogen relation $|H|=L-L^{-1}$ still holds
and $L$ is a Jacobi matrix "on the strip". With that ordering we have 
$|H|u(n)=u(n+2)+2 u(n)+u(n-2)$, which is isospectral to $H u(n)=-u(n+2)+2 u(n)-u(n-2)$.

\paragraph{}
For any $1$-dimensional complex $G$ with $n$ simplices and any discrete Laplacian $H$ close to 
the standard Hodge Laplacian $|H|$ satisfying $H(x,y)=0$ if $x \cap y = \emptyset$, 
there is a connection operator $L$ such that 
$H=L-L^{-1}$ holds. In that case, both $L$ and $L^{-1}$ have zero entries $L(x,y)=L^{-1}(x,y)$ if $x,y$
do not intersect. We just need to compute the determinant of the derivative $d\phi$ of the map 
$\phi: L \to \pi(L-L^{-1})$ on the finite dimensional space 
$X = \{ A \in M_n(R) \; | \; A=A^T, A(x,y)=0$ if $x \cap y = \emptyset.\}$. Here $\pi$ is the
projection onto $X$. Given an operator $H \in X$. We want to write it as $H=L-L^{-1}$. One can construct $L$
near a known solution by apply the Newton step to the equation $\phi(L)=H-L+L^{-1} =0$ in order 
to solve the $H_{ij} - L_{ij} + g_{ij}(L) = 0$, which are rational functions in the unknown
entries $L_{ij}$ as $g=L^{-1}$ has explicit Cramer type rational expressions.

\paragraph{}
An important open question is how
to extend the hydrogen relation to higher dimensional complexes. We would like to write the sign-less
Hodge operator $|H|$ as a sum $L-L^{-1}$ where $L$ has the spectral symmetry that $L^2$ and $L^{-2}$
have the same spectrum. If such a symmetric extension is not possible,
we can still look at the relation $L-L^{-1} = K$ anyway. It is just that $K$ is not a Hodge
Laplacian any more. In higher dimensions, the spectrum of $H$ can have
negative parts. Still, we can interpret the solution $L^n \psi$ of the reversible random walk 
as a solution of the Laplace equation $\Delta \psi = K^2 \psi$. 

\paragraph{}
Next, we look at the question about the robustness of the hydrogen relation. Can it be extended to 
situations in which the interaction between simplices is not just $L(x,y)=1$ but a number $1+\epsilon(x,y)$? 
The answer is yes, (in some rare cases like the circular a conservation law has to be obeyed as
otherwise, the symmetry produces a zero Jacobean determinant not allowing the implicit function
theorem to be applied. It is quite remarkable that $|H|=L-L^{-1}$ for finite range
Laplacians $H,L$ can be perturbed so that in the perturbation, still both $L$ and $H$ are finite range
implying that $L^{-1}$ has finite range. The simplest case is the Jacobi case, where the graph is a
linear graph or circular graph. The hydrogen relation might go over to perturbations in 
the infinite dimensional case as we have a Banach space of bounded operators. A technical difficulty is
to verify that the Jacobean operator of the map $L \to L-L^{-1}$ is bounded and invertible. 

\paragraph{}
For random Jacobi matrices, where $a(n),b(n)$ are defined by a dynamical system $T$, the relation $L-L^{-1}=H$
requires that $T$ is renormalized in the sense that it is an integral extension of an other system.
(The word ``random operator" is here used in the same way than ``random variable" in probability theory; 
there is no independence nor decorrelation assumed.)
Let us restrict to $\ZZ$, so that time is $1$-dimensional and where the $a(n),b(n)$ can be given
by an integral extension $S$ of an automorphism $T$ of a probability space or an integral extension $S$
of a homeomorphism $T$ of a compact metric space. (An integral extension of $S:X \to X$ is 
$T((x,1)) = (x,2)$ and $T((x,2))=(Sx,1)$. It satisfies $S^2=T$ so that $S^2$ is never ergodic.
Not all dynamical systems are integral extensions; mixing systems are never integral extensions.)

\paragraph{}
We believe that for any random Jacobi matrix $H u(n)=a(n) u(n+2)+a(n-2) u(n-2)+b(n) u(n)$ close enough to $a=0,b=2$,
there is a Jacobi matrix $L u(n) = c(n) u(n+2) + d(n) u(n+1) + c(n-2) u(n-2) + d(n-1) u(n-1) 
+ e(n) u(n)$ of the same type for which the hydrogen relation $H=L-L^{-1}$ holds. 
The condition of being a Jacobi matrix
of this type can be rephrased as an equation $G(L,H)=0$ by bundling all conditions
$[H-L+L^{-1}]_{i,j}=0$ for all $i,j$ and $L_{i,j}=0$ for all $|i-j|>2$. If 
$\partial_L G(L,H)$ is invertible, it is possible to compute the functional derivative of 
$\psi_{ij}(L)=[H-L+L^{-1}]_{ij}$ and assure that the inverse $L^{-1}$ is a Jacobi matrix in a strip.
The hydrogen relation $H=L-L^{-{1}}$ gives then that $L^{-1}$ is a Greens function of the same 
finite range. A priori, it only is a Toeplitz operator and not a Jacobi matrix. It would of course be
nicer to have explicit formulas for $L$, similarly as we can compute $D$ satisfying $L=D^2+E$ for $E$
in the resolvent set of $L$ in terms of Titchmarsh-Weyl $m$-functions. 

\begin{center} \begin{figure}
\scalebox{0.48}{\includegraphics{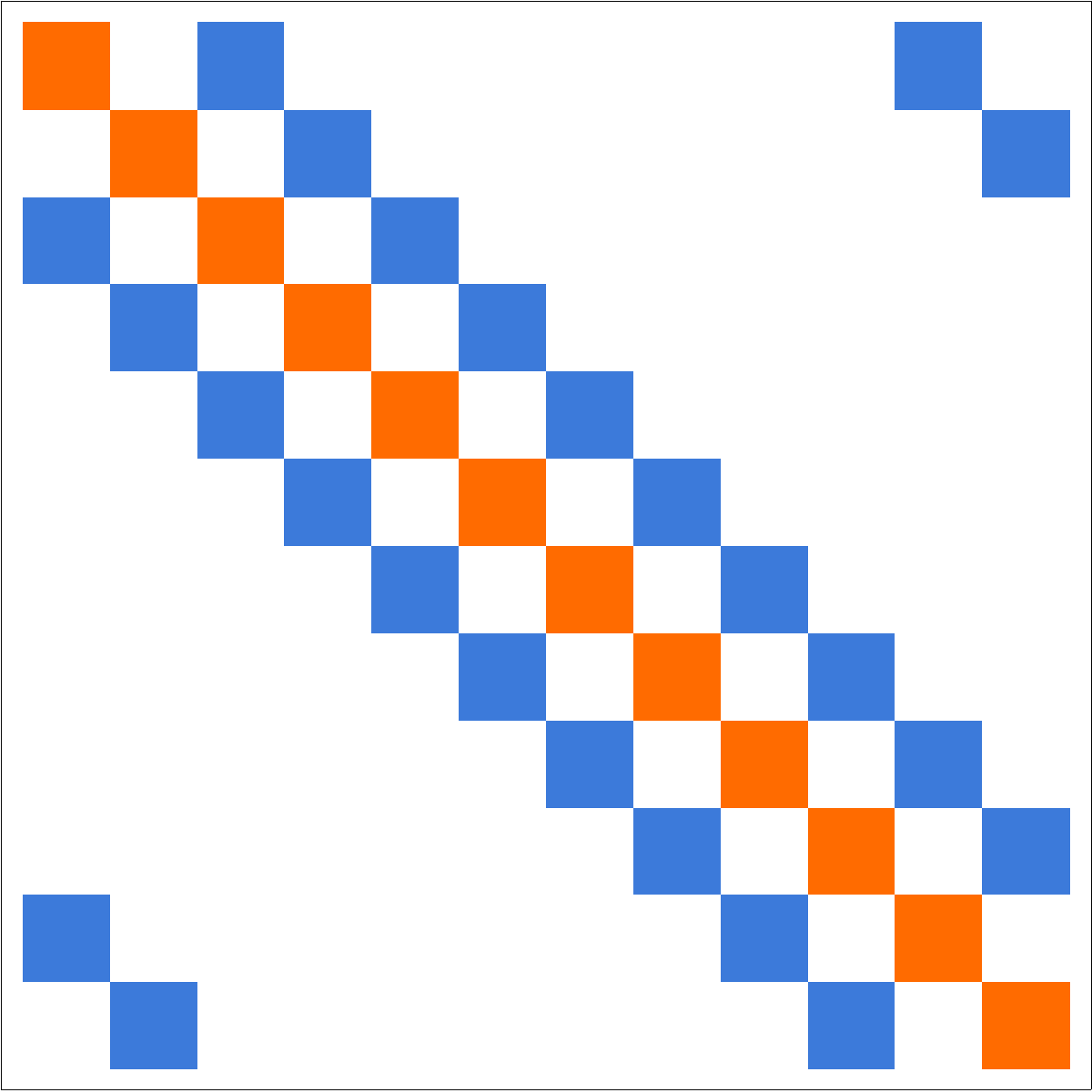}}
\scalebox{0.48}{\includegraphics{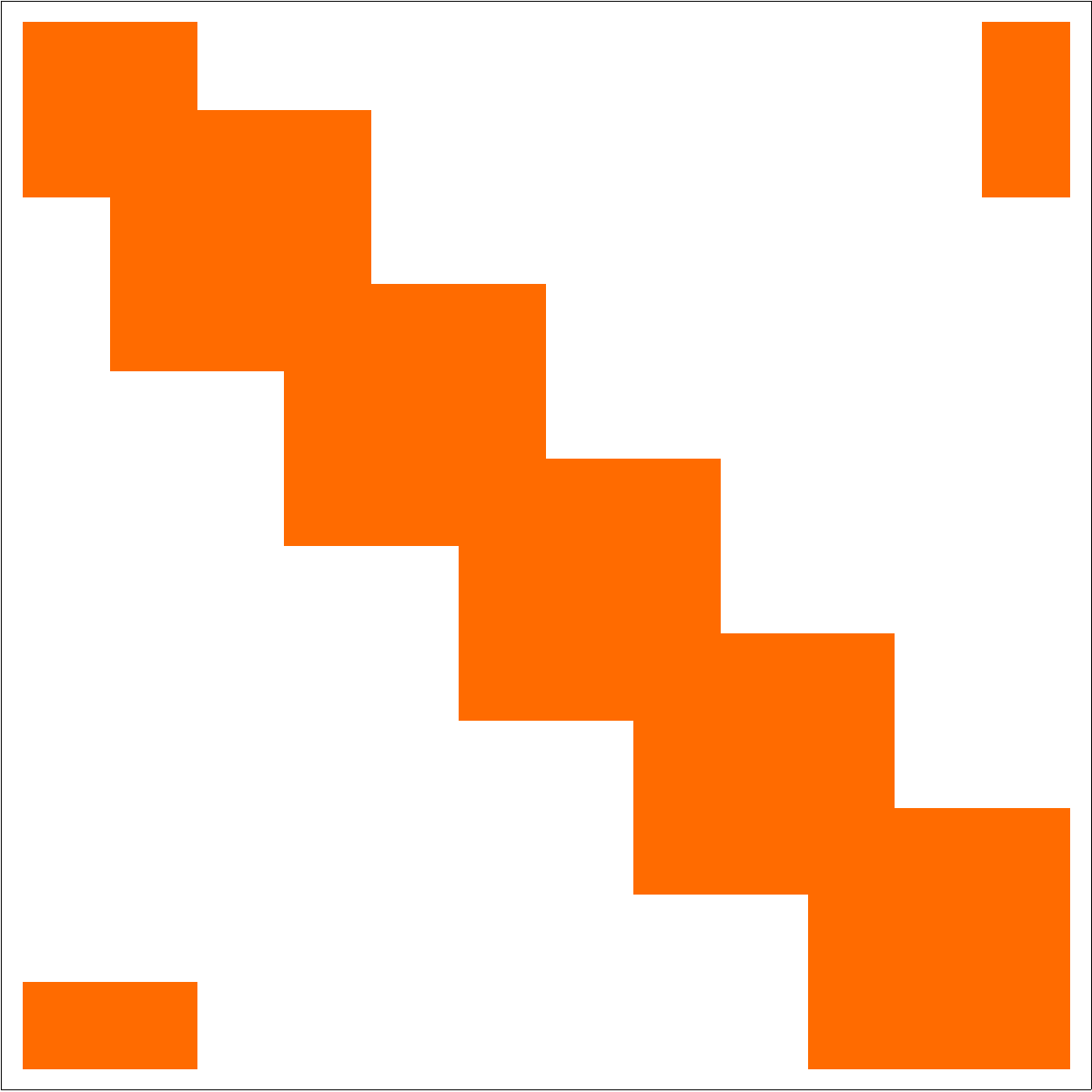}}
\caption{
\label{freelaplace}
The Hodge operator $H$ of the Laplacian $H_0$ on the circular graph is seen
here with the ordering so that $H$ is again a Laplacian, but on a larger scale
(this is the trivial renormalization picture) and the connection 
Laplacian $L$. We display here periodic matrices but in the limit we have operators on $l^2(\ZZ)$ 
the hydrogen relation $|H|=L-L^{-1}$ still holds. The matrix $L$ can not be diagonalized with
Fourier. We needed the hydrogen relation to diagonalize $L$ \cite{DyadicRiemann}.
}
\end{figure} \end{center}

\paragraph{}
Simplicial complexes define a ring. The addition in this "strong ring" is obtained by taking the disjoint union. 
This monoid can be extended with the Grothendieck construction (in the same way as integers are formed from
the additive monoid of natural numbers or fractions are formed from the multiplicative monoid of non-zero integers) 
to a group in which the empty complex $0$ is the zero element. The Cartesian product of
two complexes is not a simplicial complex any more. One can however look at the ring generated
by simplicial complexes. This ring is a unique factorization domain and the simplicial complexes
are the primes \cite{StrongRing}. We call it the strong ring because the corresponding connection
graphs multiply with the strong product in graph theory. 
One can now define both a connection matrix $L(A \times B)$
as well as a Hodge operator $H(A \times B)$. If $A$ has $n$ elements and $B$ has $m$ elements,
then $L(A \times B)$ and $H(A \times B)$ are both $nm \times nm$ matrices. If $\lambda_k$ are
the eigenvalues of $L(A)$ and $\mu_k$ are the eigenvalues of $L(B)$, then $\lambda_k \mu_l$ are
the eigenvalues of $L(A \times B)$.  If $\lambda_k$ are the eigenvalues of $H(A)$ and 
$\mu_k$ are the eigenvalues of $H(B)$ then $\lambda_k + \lambda_l$ are the eigenvalues of 
$H(A \times B)$. We can now ask whether the product operators $L(A \times B)$ and 
$H(A \times B)$ still satisfy the hydrogen relations. This is not true, but also not to
be expected. What sill happens in general for all simplicial complexes that 
$g=L(A \times B)^{-1}$ satisfies the energy theorem
$\sum_{x \in A \times B} \sum_{y \in A \times B} g(x,y) = \chi(A \times B) = \chi(A) \chi(B)$. 
Also, in the $1$-dimensional case, still $L^2(A \times B)$ has the same 
spectrum than $L^{-2}(A \times B)$. 

\paragraph{}
If we think of $L-L^{-1}$ as a $1$-dimensional derivative and $L^2-2 + L^{-2}$ as a second
derivative, then we can look at $\Delta = L_A^2-2 + L_A^{-2} + L_B^2-2 + L_B^{-2}$ as a discrete
Laplacian leading to a random walk $\Delta u = L^2(A \times B) u$ parametrized by a two 
dimensional time. Again, given a quaternion valued function on the simplices of the graph gives
now a unique solution $u(n,m) = L_A^n L_B^m u$. 
This can be generalized to any dimension. In this model, we should think of the graph $G$ as
the ``fibre" over a discrete lattice. An other way to think about it is to see the lattice
$Z^2$ as a two-dimensional time for a stochastic process on the graph. The stochastic process is
interesting because it describes a classical random walk in a graph if time is positive.
That this random walk can be reversed allows to walk backwards, again with finite propagation speed. 
While the backwards process given by $L_A^{-1}$ or $L_B^{-1}$ is still propagating in the same 
way than the forward random walk, it is not Perron-Frobenius. There is an arrow of time. 

\paragraph{}
What is interesting about the hydrogen formula $L-L^{-1} = |H|$ is that the scales are fixed. 
If we insist $L$ to be integer valued, we can not wiggle any coefficients, nor multiply $H$ with 
a factor. In comparison, in \cite{Kni98} we have for numerical purposes time discretized a 
Schr\"odinger equation $i h u' = L u$ with bounded Hamiltonian 
$L$ by scaling the constant such that $L$ has a small norm. We then looked at a 
discrete time dynamics for a deformed operator which interpolates the continuum differential equation.
While the deformation does change the spectrum it does not the nature
of the spectral measures (like for example whether it is singular continuous, 
absolutely continuous or pure point), the method is especially useful, using the Wiener 
criterion, to measure numerically whether some discrete spectrum is present. 
The analogy is close: we had defined $U=L \pm i \sqrt{1-L^2}$ 
so that $U+1/U = 2 L$. If $\psi(t) = \exp(it L/h)$ is the quantum evolution, then
$\psi(n+1)+\psi(n-1) = 2 L \psi(n)$ with $\psi(n) = U^n \psi$.
Now $U^n = \exp(i n L) = \cos(i n \arccos(L))  + i \sin(i n \arccos(L)) = T_n(L) + i R_n(L)$
which are Tchebychev polynomials of $L$. This can be done faster than matrix exponentiation
and leads to finite propagation speed, a property which $\exp(i t L/h)$ does not enjoy. For
numerical purposes, it allowed to make sure that no boundary effects play a role or that the
process can be run using exact rational expressions. 
We have used the method to measure the spectrum of almost periodic operators like
the almost Mathieu operator, where the spectrum is understood or magnetic two-dimensional
operators, where the spectrum is not understood.  In the present hydrogen relation, we can
not scale $L$. In some sense, the Planck constant is fixed. 

\paragraph{}
It could be interesting to look at the frame work in the context of a random geometric model called
``Causal Dynamical Triangulations" \cite{CausalDynamicalTriangulation}.
This is a situation where a sequence $G_k$ of 
$1$-dimensional simplicial complexes is given by some time evolution. For the following it is not relevant
how $G_k$ are chosen as long as they are given by a dynamical process and the
number of simplices is fixed. If $L_k$ are the connection Laplacians of $G_k$, we can look at the system 
$L_{k+1} - L_{k-1}^{-1} = V_k$. 
The random walk becomes a cocycle $\psi_k = L_k \cdots L_2 L_1 \psi$ and the spectral radius $\rho$ becomes a 
Lyapunov exponent. 

\paragraph{}
If the sequence of graphs with the same number of vertices and edges, the orbit closure
defines an ergodic random process. By Oseledec's theorem, the Lyapunov exponent exists. Mathematically, the problem 
$u(n+2) - 2 u(n) + u(n-2) = V_n u(n)$
is now a {\rm ergodic Jacobi operator} on the strip, in which the graphs $G_k$ are the fibers. 
The condition that the total number of vertices and edges remain constant
is a topological condition. We could also insist that $G_k$ have all the same topology. 
The solution space is then described by a quaternion-valued field:
the hydrogen relations are already second order
and we have an even and odd branch of time $\ZZ$ which evolve independently. So there are four wave function
components needed to determine the initial condition. 

\paragraph{}
The ergodic situation is interesting because in a random setup, there is a chance to get 
localization and so obtain solutions $\psi(n) = L^n \psi$ which go to zero both for $n \to \infty$
as well as $n \to -\infty$. A concrete question is to construct a finite set of graphs
$\{ G_1, \dots G_k \}$ with corresponding connection Laplacians 
$\{ L_1, \dots , L_k \}$ so that for almost all two sided sequences $\omega \in \{ 1,2, \dots, k\}^{\ZZ}$ 
there exists $\psi_{\omega}(0)$ such that the random walk in a random environment
\begin{equation}
\label{randomwalkrandomenvironment}
\psi_{\omega}(n) = L_{\omega(n)} L_{\omega(n-1)} \dots, L_{\omega(2)} L_{\omega(1)} \psi(0)
\end{equation}
has the property that
$|\psi_{\omega}(n)| \to 0$ for $|n| \to \infty$. This does not look strange if comparing with
the Anderson localization picture, where symplectic transfer matrices lead by Oseledec to 
almost certain exponential growth in one direction, but where it is possible that for almost all
$\omega$ in the probability space, one has a complete set of eigenfunctions (which decay both 
forward and backward in time). In our case now, it is all just random walks $\psi(n)$ 
on a dynamically changing graphs. 

\paragraph{}
The dynamical triangulation picture suggests to start with a single abstract simplicial 
complex $G$ and think of $G$ as a ``space time manifold" with the property $G$
is a disjoint union of finite abstract simplicial complexes $G=\{ G_k \}_{k \in \ZZ}$ 
for which each $G_k$ has the same cardinality $m$. We don't really need to assume the ``time slices" $G_k$ 
to be $1$-dimensional. Each $G_k$ has a connection Laplacian $L_k$. Because there are
only a finite set $A$ of simplicial complexes with $m$ simplicies, the space-time complex 
$G$ defines an element in the compact metric space $A^{\ZZ}$. As usual in symbolic
dynamics, the single sequence $G$ defines its orbit closure $\Omega$
(the subset of all accumulation points which is a shift-invariant and closed subset).
A natural assumption (avoiding many-world interpretations),
is that the shift is uniquely ergodic; the complex $G$ alone determines
a unique natural probability space $(\Omega,\mu)$.

\paragraph{}
One can now study the properties of the random walk $\psi_{\omega}(n)$ given in (\ref{randomwalkrandomenvironment}).
The unimodularity theorem assures that $L_k$ are invertible and in the $1$-dimensional
case (if $m$ is even) also symplectic. The structure of the inverse
$g=L^{-1}$ assures that the random walk is two sided and that also the inverse is a random walk as
the transition steps $g(x,y)=L^{-1}(x,y)$ are defined by the Green star formula 
\cite{ListeningCohomology}. The Anderson localization picture suggests the existence
of examples, where $||\psi_{\omega}(n)||_2$ is bounded. The $\psi(n)$ are solutions to a reversible
random walk, but they are also localized. They solve $\Delta \psi = K \psi$
with $K=(L-L^{-1})^2$ which is a Wheeler-DeWitt type eigenvalue equation.
It describes the reversible random walk in a random environment so that 
solutions are path integrals. The model is robust in the sense that if
we perturb $K=H^2$, we can still write $L-L^{-1}=H$ with operators $L,L^{-1}$ which have
the same support and so finite propagation speed.

\vfill 
\pagebreak

\section{Mathematica Code} 

\paragraph{}
We construct a random graph, compute both $L$ and $|H$
then check the hydrogen relation $L-L^{-1}-|H|=0$. 

\begin{tiny}
\lstset{language=Mathematica} \lstset{frameround=fttt}
\begin{lstlisting}[frame=single]
(* The sign-less Hydrogen relation, O.Knill, 2/10/2018  *)
{v,e}={30,80}; s=RandomGraph[{v,e}]; n=v+e;
bracket[x_]:={x}; set[x_]:={x[[1]],x[[2]]};        
G=Union[Map[set,EdgeList[s]],Map[bracket,VertexList[s]]];     
m[a_,b_]:=If[SubsetQ[a,b]&&(Length[a]==Length[b]+1),1,0];
d=Table[m[G[[i]],G[[j]]],{i,n},{j,n}]; (* signl. deriv *)
Dirac=d+Transpose[d];  H=Dirac.Dirac;  (* signl. Hodge *)
L=Table[If[DisjointQ[G[[k]],G[[l]]],0,1],{k,n},{l,n}];
Total[Flatten[Abs[(L-Inverse[L]) - H]]]
\end{lstlisting}
\end{tiny}

The various spectral radius estimates can be compared.
In the various tables, we first compute $\rho=\rho(H)$, then $|\rho|=\rho(|H|)$,
then $r-1/r$ with $r=1+ ({\rm max}_x P(3,x))^{1/3}$,
then $r-1/r$ with $r=1+{\rm max}_{x,y \in E} d(x)+d(y)$,
then the edge estimates and vertex degree estimates which 
involve the diameter of the graph. 

\begin{tiny}
\lstset{language=Mathematica} \lstset{frameround=fttt}
\begin{lstlisting}[frame=single]
(* Estimating the spectral radius,      O.Knill, 2/18/2018  *)
ClearAll["Global`*"];
L[s_]:=Module[{v=VertexList[s],e=EdgeList[s],n,G,L},b[x_]:={x};
  p[x_]:={x[[1]],x[[2]]};G=Union[Map[p,e],Map[b,v]];n=Length[G]; 
  Table[If[DisjointQ[G[[k]],G[[l]]],0,1],{k,n},{l,n}]];
Barycent[s_]:=Module[{v=VertexList[s],e=EdgeList[s],V,W},
  p[x_]:={x[[1]],x[[2]]};  W=Flatten[Table[
  {p[e[[k]]]->e[[k,1]],p[e[[k]]]->e[[k,2]]},{k,Length[e]}]];
  V=Union[v,Map[p,e]];UndirectedGraph[Graph[V,W]]];BC=Barycent;
sprime[s_]:=AdjacencyGraph[L[s]-IdentityMatrix[Length[L[s]]]];
Rho[s_] :=Max[Eigenvalues[1.0     Normal[KirchhoffMatrix[s]]]];
Rhoa[s_]:=Max[Eigenvalues[1.0 Abs[Normal[KirchhoffMatrix[s]]]]];
Walk[s_,k_] :=Max[Map[Total,MatrixPower[AdjacencyMatrix[s],k]]]; 
VD=VertexDegree; (* next comes Shi and Li-Shiu-Chang estimates*)
ShLiShCh[s_]:=Module[{R=GraphDiameter[s],g=VD[s],d,v},
  d=Max[g]; v=Length[VertexList[s]];  N[2d-1/(v (2 R+1))]]; 
StBrHo[s_]:=Module[{e,sp=sprime[s]},(*Stanley-Brualdi-Hoffmann*)
  e=Length[EdgeList[sp]]; u=1+N[(Sqrt[1+8 e]-1)/2]; u-1/u];
Deg2[s_]:=Module[{e=EdgeList[s],v=VertexList[s],r},
  d[k_]:=1+VD[s,v[[e[[k,1]]]]]+VD[s,v[[e[[k,2]]]]];
  r=Max[Table[d[k],{k,Length[e]}]]; N[r-1/r]];
Walk3[s_]:=Module[{r=1+(Walk[sprime[s],3])^(1/3)},N[r-1/r]]; 
F[s_]:={Rho[s],Rhoa[s],Walk3[s],Deg2[s],StBrHo[s],ShLiShCh[s]};
Table[F[CompleteGraph[{3,k}]],{k,3,9}] // MatrixForm
Table[F[CycleGraph[k]],{k,4,10}]       // MatrixForm
Table[F[CompleteGraph[k]],{k,2,8}]     // MatrixForm
Table[F[StarGraph[k]],{k,4,10}]        // MatrixForm
Table[F[WheelGraph[k]],{k,5,11}]       // MatrixForm
Table[F[PetersenGraph[6,k]],{k,2,8}]   // MatrixForm
Table[F[GridGraph[{k,1}]],{k,2,8}]     // MatrixForm
Table[F[GridGraph[{6,k}]],{k,2,8}]     // MatrixForm
Table[F[RandomGraph[{20,5}]],{7}]      // MatrixForm
Table[F[RandomGraph[{20,50}]],{7}]     // MatrixForm
Table[F[RandomGraph[{30,100}]],{7}]    // MatrixForm
Table[F[BC[RandomGraph[{20,100}]]],{7}]// MatrixForm
\end{lstlisting}
\end{tiny}
% ConnectionEst[s_]:=Module[{r=Max[Map[Total,L[s]]]},N[r-1/r]];

\vfill
\pagebreak

\section{Measurements}

\paragraph{}
The following tables were obtained by running the code displayed in the above section. In 
each case, we see the spectral radius $\rho$, the sign-less spectral radius $|\rho|$, 
the dual vertex estimate, a three step walk estimate, then a global edge estimate and
finally an upper bound obtained in \cite{LiShiuChan2010}.

\paragraph{}
{\bf Complete bipartite Graphs} are computed from $K_{3,3}$ (utility graph) to $K_{3,9}$. These
are regular graphs for which \cite{LiShiuChan2010} do not apply. Indeed, we see that the
estimate is then below the actual spectral radius $\rho$. 
 
\begin{tabular}{|c|c|c|c|c|c|} \hline
     $\rho$ & $|\rho|$ & Thm~(\ref{dualvertex}) & 3Walk & \cite{BrualdiHoffmann,Stanley1987} & \cite{LiShiuChan2010} \\ \hline
     6. & 6. & 6.85714 & 6.22655 & 8.88889 & 5.96667 \\ \hline
     7. & 7. & 7.875 & 7.23871 & 10.8126 & 7.97143 \\ \hline
     8. & 8. & 8.88889 & 8.24856 & 12.6793 & 9.975 \\ \hline
     9. & 9. & 9.9 & 9.25665 & 14.5115 & 11.9778 \\ \hline
     10. & 10. & 10.9091 & 10.2634 & 16.3213 & 13.98 \\ \hline
     11. & 11. & 11.9167 & 11.269 & 18.1156 & 15.9818 \\ \hline
     12. & 12. & 12.9231 & 12.2739 & 19.8985 & 17.9833 \\ \hline
    \end{tabular}

\paragraph{}
{\bf Cyclic graphs} are computed from $C_4$ to $C_{10}$. Also these are regular graphs.

 \begin{tabular}{|c|c|c|c|c|c|} \hline
     $\rho$ & $|\rho|$ & Thm~(\ref{dualvertex}) & 3Walk & \cite{BrualdiHoffmann,Stanley1987} & \cite{LiShiuChan2010} \\ \hline
     4. & 4. & 4.8 & 4.19371 & 5.24008 & 3.95 \\ \hline
     3.61803 & 4. & 4.8 & 4.19371 & 5.83333 & 3.96 \\ \hline
     4. & 4. & 4.8 & 4.19371 & 6.36744 & 3.97619 \\ \hline
     3.80194 & 4. & 4.8 & 4.19371 & 6.85714 & 3.97959 \\ \hline
     4. & 4. & 4.8 & 4.19371 & 7.31193 & 3.98611 \\ \hline
     3.87939 & 4. & 4.8 & 4.19371 & 7.73832 & 3.98765 \\ \hline
     4. & 4. & 4.8 & 4.19371 & 8.14105 & 3.99091 \\ \hline
    \end{tabular}

\paragraph{}
{\bf Complete Graphs} are computed from $K_2$ (interval) to $K_8$ (an 7 dimensional simplex). These
are regular graphs for which \cite{LiShiuChan2010} do not apply.

 \begin{tabular}{|c|c|c|c|c|c|} \hline
     $\rho$ & $|\rho|$ & Thm~(\ref{dualvertex}) & 3Walk & \cite{BrualdiHoffmann,Stanley1987} & \cite{LiShiuChan2010} \\ \hline
     2. & 2. & 2.66667 & 2.20091 & 2.17116 & 1.83333 \\ \hline
     3. & 4. & 4.8 & 4.19371 & 4.56245 & 3.88889 \\ \hline
     4. & 6. & 6.85714 & 6.22655 & 7.31193 & 5.91667 \\ \hline
     5. & 8. & 8.88889 & 8.24856 & 10.4174 & 7.93333 \\ \hline
     6. & 10. & 10.9091 & 10.2634 & 13.8539 & 9.94444 \\ \hline
     7. & 12. & 12.9231 & 12.2739 & 17.5971 & 11.9524 \\ \hline
     8. & 14. & 14.9333 & 14.2817 & 21.6258 & 13.9583 \\ \hline
    \end{tabular}

\paragraph{}
{\bf Star Graphs} are computed starting with central degree $3$ to central degree $9$.

 \begin{tabular}{|c|c|c|c|c|c|} \hline
     $\rho$ & $|\rho|$ & Thm~(\ref{dualvertex}) & 3Walk & \cite{BrualdiHoffmann,Stanley1987} & \cite{LiShiuChan2010} \\ \hline
     4. & 4. & 4.8 & 4.19371 & 4.56245 & 5.95 \\ \hline
     5. & 5. & 5.83333 & 5.21154 & 5.64311 & 7.96 \\ \hline
     6. & 6. & 6.85714 & 6.22655 & 6.69818 & 9.96667 \\ \hline
     7. & 7. & 7.875 & 7.23871 & 7.73832 & 11.9714 \\ \hline
     8. & 8. & 8.88889 & 8.24856 & 8.76893 & 13.975 \\ \hline
     9. & 9. & 9.9 & 9.25665 & 9.79308 & 15.9778 \\ \hline
     10. & 10. & 10.9091 & 10.2634 & 10.8126 & 17.98 \\ \hline
    \end{tabular}

\paragraph{}
{\bf Wheel Graphs} are computed with central degree 4 up to central degree 10

 \begin{tabular}{|c|c|c|c|c|c|} \hline
     $\rho$ & $|\rho|$ & Thm~(\ref{dualvertex}) & 3Walk & \cite{BrualdiHoffmann,Stanley1987} & \cite{LiShiuChan2010} \\ \hline
     5. & 6.56155 & 7.875 & 6.99565 & 8.64722 & 7.96 \\ \hline
     6. & 7.23607 & 8.88889 & 7.8263 & 9.9 & 9.96667 \\ \hline
     7. & 8. & 9.9 & 8.69993 & 11.0994 & 11.9714 \\ \hline
     8. & 8.82843 & 10.9091 & 9.60356 & 12.2617 & 13.975 \\ \hline
     9. & 9.70156 & 11.9167 & 10.5285 & 13.3969 & 15.9778 \\ \hline
     10. & 10.6056 & 12.9231 & 11.4687 & 14.5115 & 17.98 \\ \hline
     11. & 11.5311 & 13.9286 & 12.4203 & 15.6102 & 19.9818 \\ \hline
    \end{tabular}

\paragraph{}
{\bf Peterson Graphs} ${\rm Peterson}(6,k)$ are computed for $k=2$ to $k=8$. 

 \begin{tabular}{|c|c|c|c|c|c|} \hline
     $\rho$ & $|\rho|$ & Thm~(\ref{dualvertex}) & 3Walk & \cite{BrualdiHoffmann,Stanley1987} & \cite{LiShiuChan2010} \\ \hline
    5.23607 & 6. & 6.85714 & 6.22655 & 12.4305 & 5.99074 \\ \hline
    5.41421 & 5.41421 & 6.85714 & 5.92748 & 10.8126 & 5.99074 \\ \hline
    5.23607 & 6. & 6.85714 & 6.22655 & 12.4305 & 5.99074 \\ \hline
    6. & 6. & 6.85714 & 6.22655 & 12.4305 & 5.99074 \\ \hline
    5.23607 & 5.23607 & 6.85714 & 5.87411 & 10.8126 & 5.99242 \\ \hline
    6. & 6. & 6.85714 & 6.22655 & 12.4305 & 5.99074 \\ \hline
    5.23607 & 6. & 6.85714 & 6.22655 & 12.4305 & 5.99074 \\ \hline
   \end{tabular}

\paragraph{}
{\bf Linear Graphs} are taken from length $1$ to $7$ 

 \begin{tabular}{|c|c|c|c|c|c|} \hline
     $\rho$ & $|\rho|$ & Thm~(\ref{dualvertex}) & 3Walk & \cite{BrualdiHoffmann,Stanley1987} & \cite{LiShiuChan2010} \\ \hline
     2. & 2. & 2.66667 & 2.20091 & 2.17116 & 1.83333 \\ \hline
     3. & 3. & 3.75 & 3.17771 & 3.43141 & 3.93333 \\ \hline
     3.41421 & 3.41421 & 4.8 & 3.78886 & 4.31043 & 3.96429 \\ \hline
     3.61803 & 3.61803 & 4.8 & 3.96987 & 5.02531 & 3.97778 \\ \hline
     3.73205 & 3.73205 & 4.8 & 4.13272 & 5.64311 & 3.98485 \\ \hline
     3.80194 & 3.80194 & 4.8 & 4.16348 & 6.19493 & 3.98901 \\ \hline
     3.84776 & 3.84776 & 4.8 & 4.19371 & 6.69818 & 3.99167 \\ \hline
    \end{tabular}

\paragraph{}
{\bf Grid graphs} $G(6,k)$ with $k=2$ to $k=8$ lead to

 \begin{tabular}{|c|c|c|c|c|c|} \hline
     $\rho$ & $|\rho|$ & Thm~(\ref{dualvertex}) & 3Walk & \cite{BrualdiHoffmann,Stanley1987} & \cite{LiShiuChan2010} \\ \hline
    5.73205 & 5.73205 & 6.85714 & 6.20288 & 11.3786 & 5.99359 \\ \hline
    6.73205 & 6.73205 & 8.88889 & 7.78401 & 15.4104 & 7.9963 \\ \hline
    7.14626 & 7.14626 & 8.88889 & 8.00389 & 18.564 & 7.99755 \\ \hline
    7.35008 & 7.35008 & 8.88889 & 8.211 & 21.2437 & 7.99825 \\ \hline
    7.4641 & 7.4641 & 8.88889 & 8.22357 & 23.6148 & 7.99868 \\ \hline
    7.53399 & 7.53399 & 8.88889 & 8.23608 & 25.7644 & 7.99896 \\ \hline
    7.57981 & 7.57981 & 8.88889 & 8.23608 & 27.7449 & 7.99917 \\ \hline
   \end{tabular}

\paragraph{}
Now, we take {\bf random graphs} with 20 vertices and 4 edges:

 \begin{tabular}{|c|c|c|c|c|c|} \hline
     $\rho$ & $|\rho|$ & Thm~(\ref{dualvertex}) & 3Walk & \cite{BrualdiHoffmann,Stanley1987} & \cite{LiShiuChan2010} \\ \hline
     5.08613 & 5.08613 & 6.85714 & 5.49603 & 6.19493 & 8. \\ \hline
     3. & 3. & 3.75 & 3.17771 & 5.24008 & 4. \\ \hline
     3. & 3. & 3.75 & 3.17771 & 5.02531 & 4. \\ \hline
     3. & 3. & 3.75 & 3.17771 & 5.02531 & 4. \\ \hline
     4. & 4. & 4.8 & 4.19371 & 5.44565 & 6. \\ \hline
     4. & 4. & 4.8 & 4.19371 & 5.44565 & 6. \\ \hline
     2. & 2. & 2.66667 & 2.20091 & 4.8 & 2. \\ \hline
    \end{tabular}

\paragraph{}
{\bf Random graphs} with 20 vertices and 50 edges:

 \begin{tabular}{|c|c|c|c|c|c|} \hline
     $\rho$ & $|\rho|$ & Thm~(\ref{dualvertex}) & 3Walk & \cite{BrualdiHoffmann,Stanley1987} & \cite{LiShiuChan2010} \\ \hline
    10.5926 & 12.7038 & 17.9444 & 15.0334 & 26.5056 & 17.9944 \\ \hline
    9.28796 & 11.0103 & 13.9286 & 12.4608 & 25.4858 & 13.9929 \\ \hline
    11.3603 & 12.7573 & 17.9444 & 15.0837 & 26.7353 & 17.9944 \\ \hline
    10.8116 & 12.1925 & 17.9444 & 14.6776 & 26.1572 & 17.9929 \\ \hline
    10.6623 & 12.5725 & 17.9444 & 14.9725 & 26.2738 & 17.9944 \\ \hline
    10.447 & 12.8607 & 16.9412 & 14.9606 & 26.4671 & 18. \\ \hline
    10.1163 & 11.308 & 15.9375 & 13.2007 & 25.6454 & 15.9929 \\ \hline
   \end{tabular}

\paragraph{}
{\bf Random graphs} with 30 vertices and 100 edges:

 \begin{tabular}{|c|c|c|c|c|c|} \hline
     $\rho$ & $|\rho|$ & Thm~(\ref{dualvertex}) & 3Walk & \cite{BrualdiHoffmann,Stanley1987} & \cite{LiShiuChan2010} \\ \hline
    14.4959 & 16.4624 & 23.9583 & 19.885 & 41.32 & 25.9963 \\ \hline
    13.2345 & 15.5212 & 20.9524 & 18.0239 & 41.2219 & 21.9963 \\ \hline
    12.7587 & 15.8331 & 20.9524 & 18.3655 & 41.5886 & 21.9963 \\ \hline
    12.255 & 14.7693 & 18.9474 & 16.6416 & 40.7278 & 19.9963 \\ \hline
    15.5824 & 16.9586 & 25.9615 & 20.944 & 41.7586 & 27.9952 \\ \hline
    13.0898 & 15.7372 & 20.9524 & 18.2323 & 41.4179 & 21.9963 \\ \hline
    13.6639 & 16.0848 & 22.9565 & 19.4197 & 41.1974 & 23.9963 \\ \hline
   \end{tabular}

\paragraph{}
{\bf Random graphs} obtained by a Barycentric refinement of a random graph 
with 20 vertices and 100 edges:

 \begin{tabular}{|c|c|c|c|c|c|} \hline
     $\rho$ & $|\rho|$ & Thm~(\ref{dualvertex}) & 3Walk & \cite{BrualdiHoffmann,Stanley1987} & \cite{LiShiuChan2010} \\ \hline
    15.099 & 15.099 & 16.9412 & 16.1703 & 54.2612 & 27.9994 \\ \hline
    14.2081 & 14.2081 & 15.9375 & 15.2196 & 53.9441 & 25.9994 \\ \hline
    15.0969 & 15.0969 & 16.9412 & 16.1299 & 54.0002 & 27.9994 \\ \hline
    15.1967 & 15.1967 & 16.9412 & 16.2192 & 54.798 & 27.9994 \\ \hline
    15.0951 & 15.0951 & 16.9412 & 16.1226 & 54.2984 & 27.9994 \\ \hline
    15.1091 & 15.1091 & 16.9412 & 16.1761 & 54.4654 & 27.9994 \\ \hline
    16.0813 & 16.0813 & 17.9444 & 17.0785 & 54.0189 & 29.9994 \\ \hline
   \end{tabular}

\paragraph{}
Here are some computations illustrated graphically.

\begin{center} \begin{figure}
\scalebox{0.42}{\includegraphics{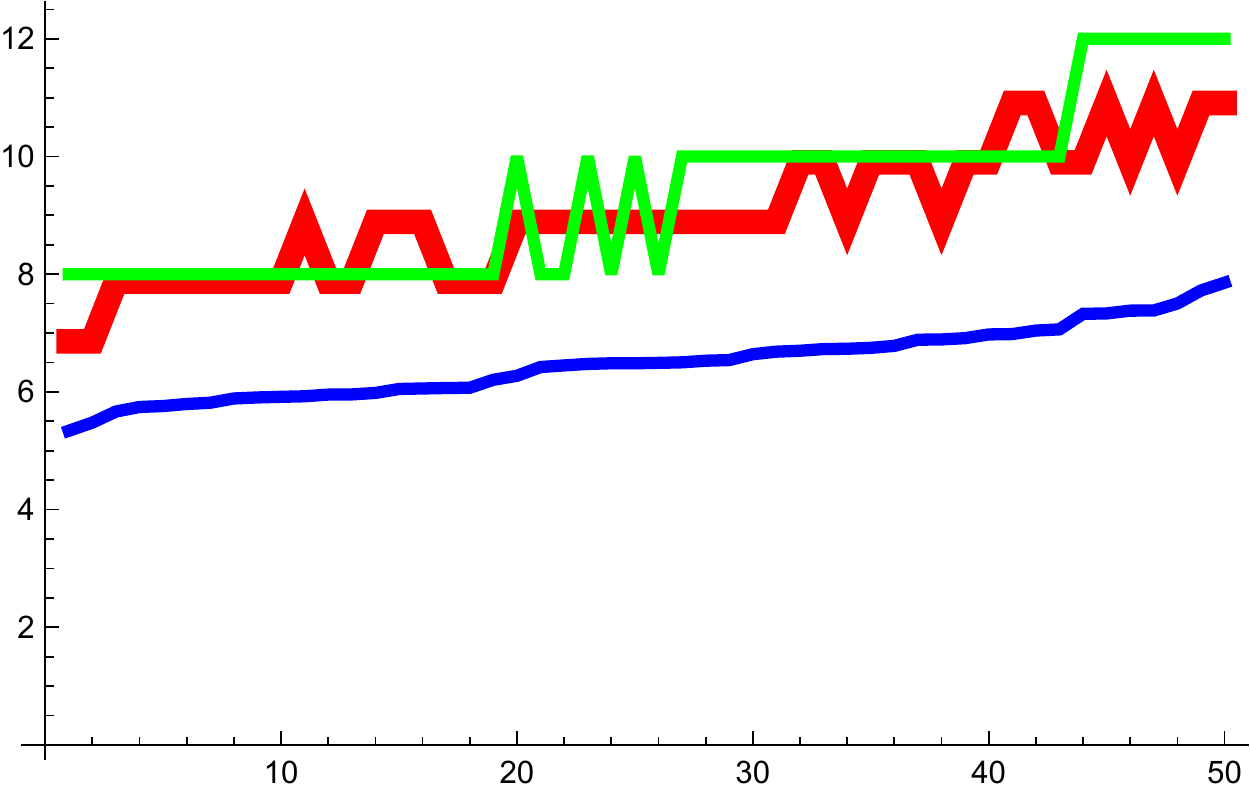}} 
\scalebox{0.42}{\includegraphics{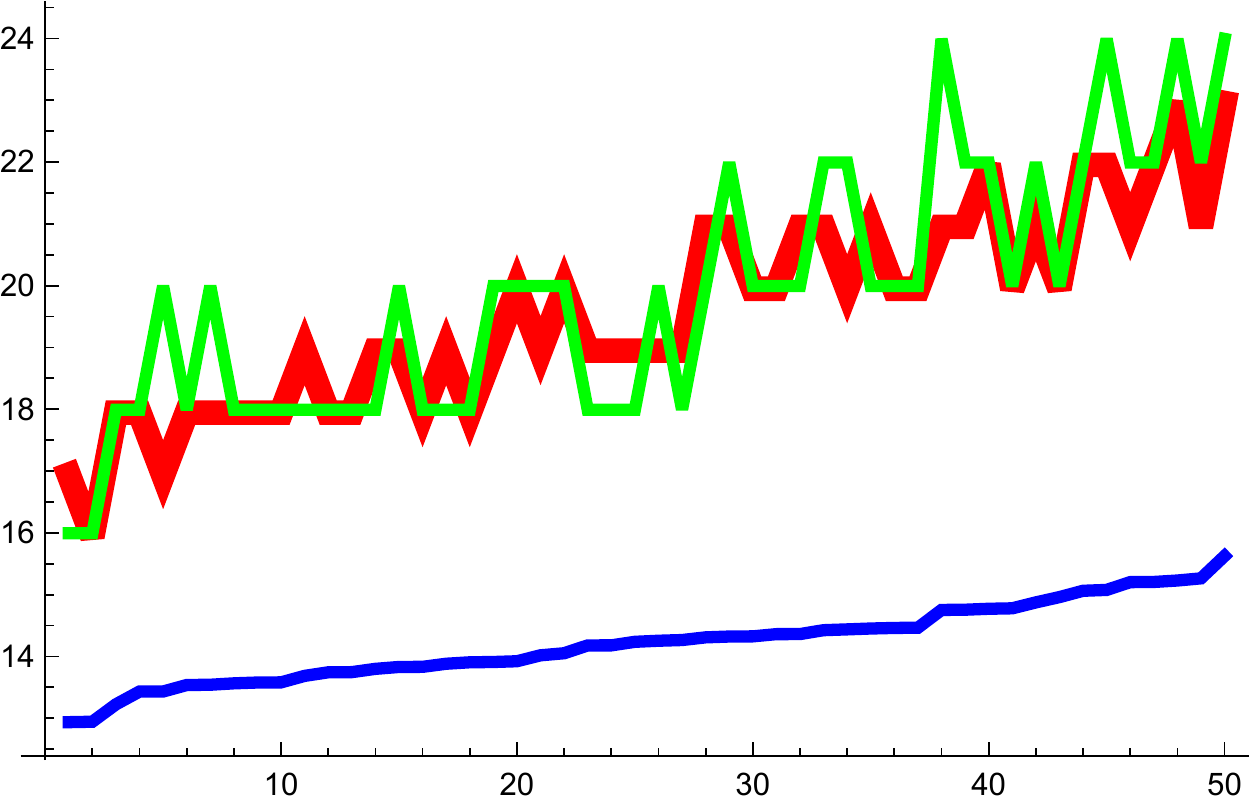}}  \\
\scalebox{0.42}{\includegraphics{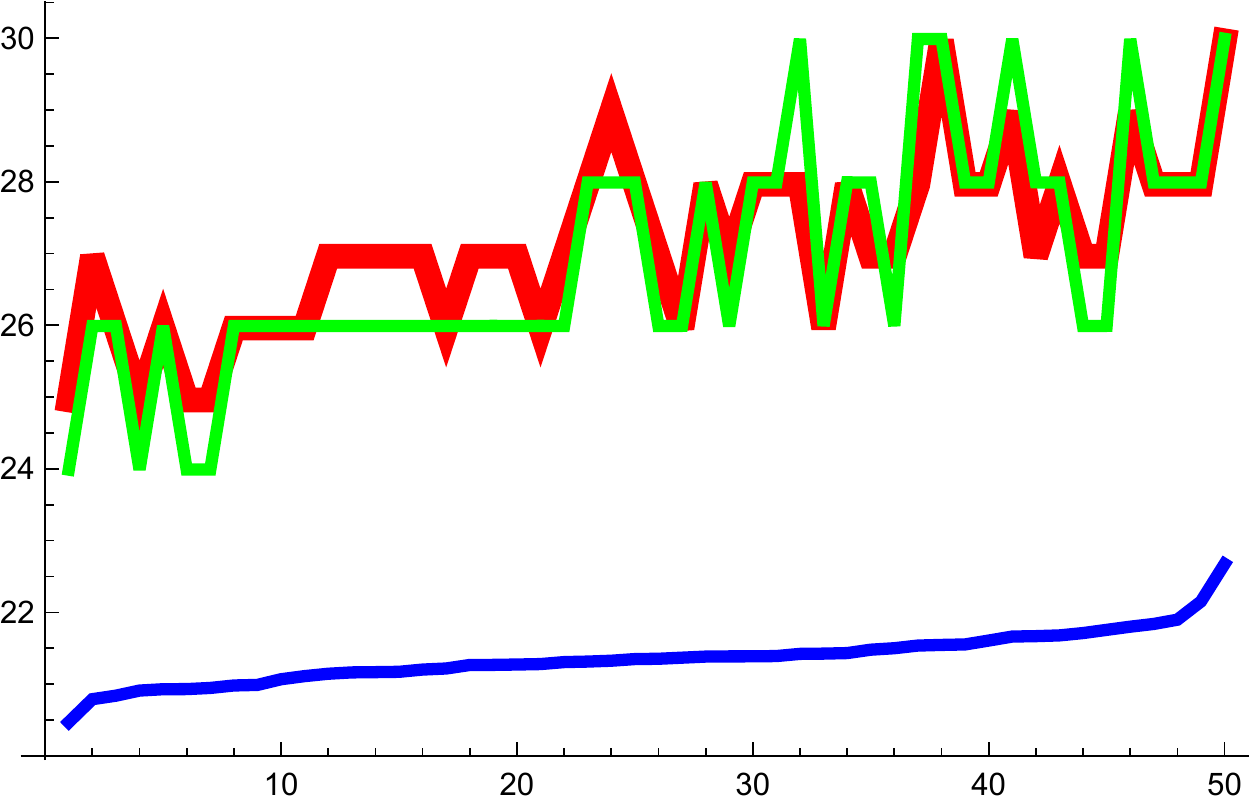}} 
\scalebox{0.42}{\includegraphics{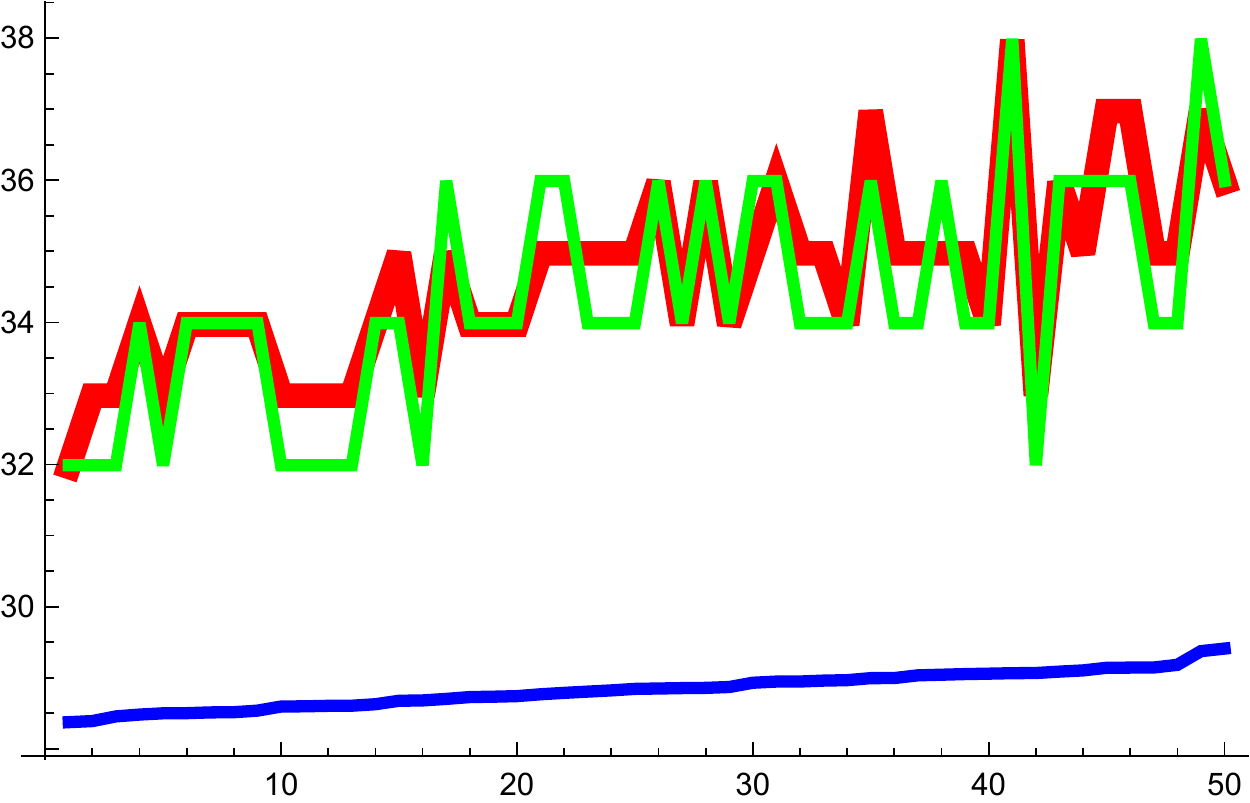}} 
\caption{
\label{eigenvectors}
We see measurements with random graphs from $E(20,p)$ with $p=0.1,0.3,0,5,0.7$. 
50 experiments were done in each case. The lowest curve is $\rho(G)$.
The thin green is the estimate $2d-Q$, with $Q$ small, estimated by many 
authors. The red (thicker) curve is the dual vertex degree estimate given 
in (\ref{dualvertex}). It is more effective for small $p$. 
}
\end{figure} \end{center}

\begin{center} \begin{figure}
\scalebox{0.42}{\includegraphics{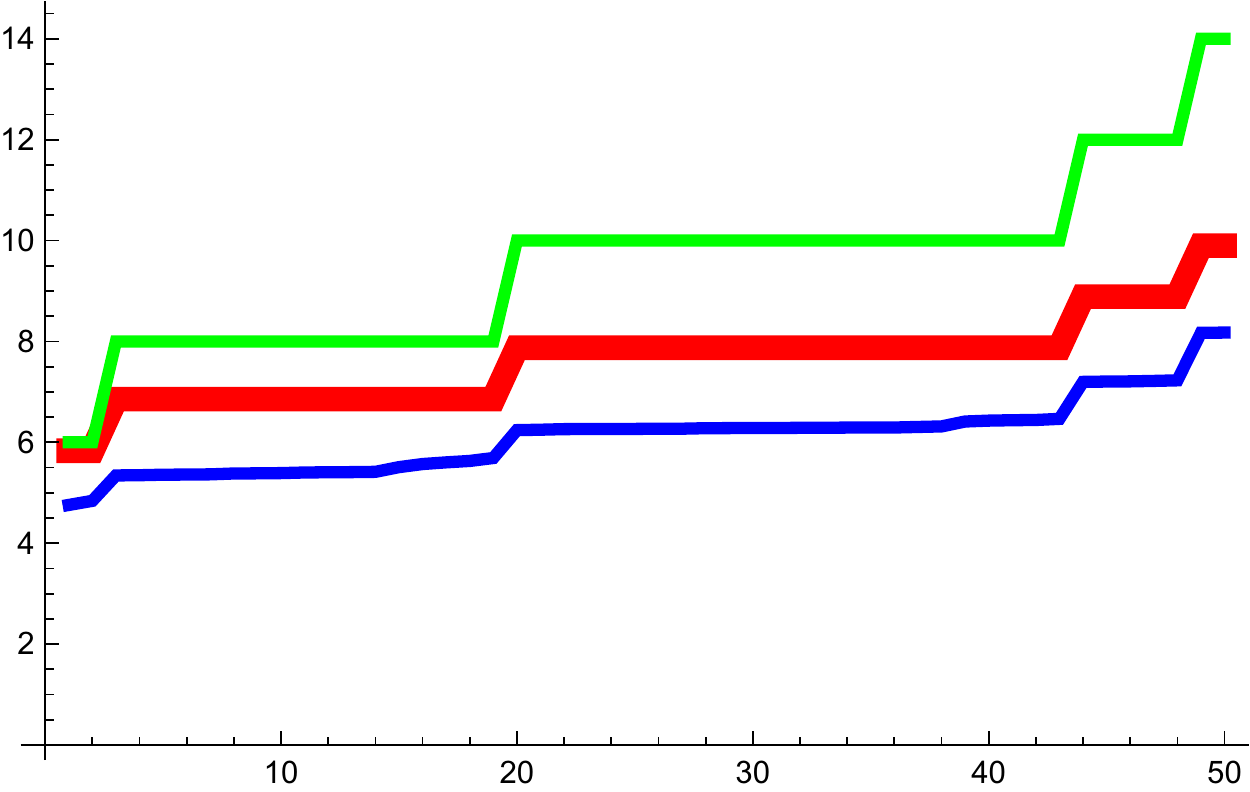}}
\scalebox{0.42}{\includegraphics{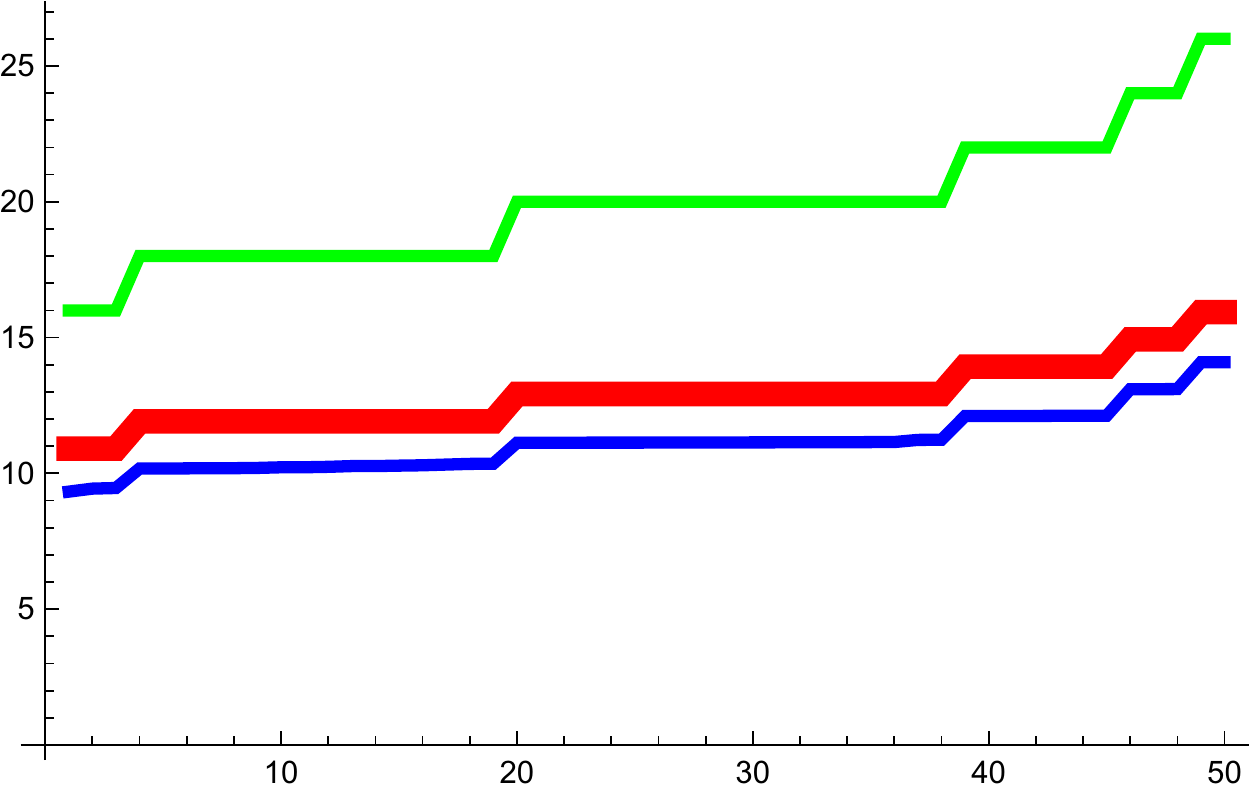}}  \\
\scalebox{0.42}{\includegraphics{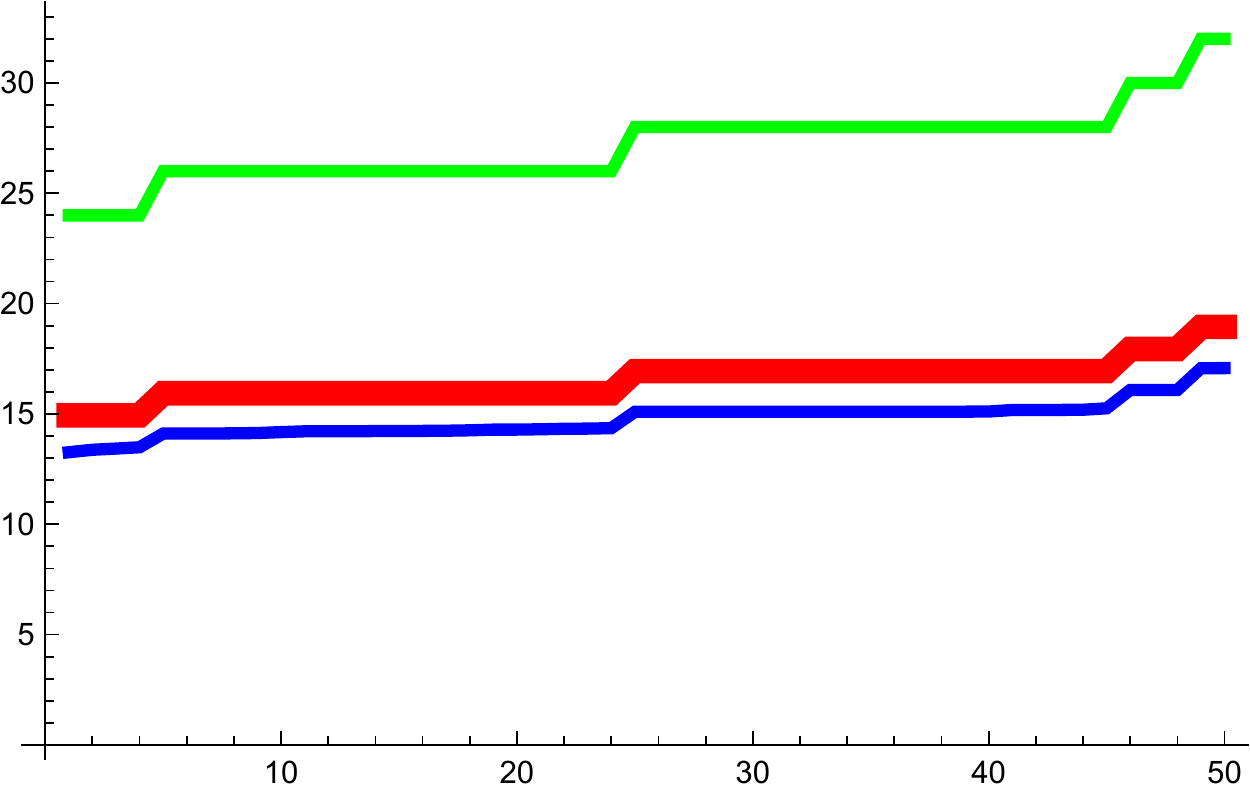}}
\scalebox{0.42}{\includegraphics{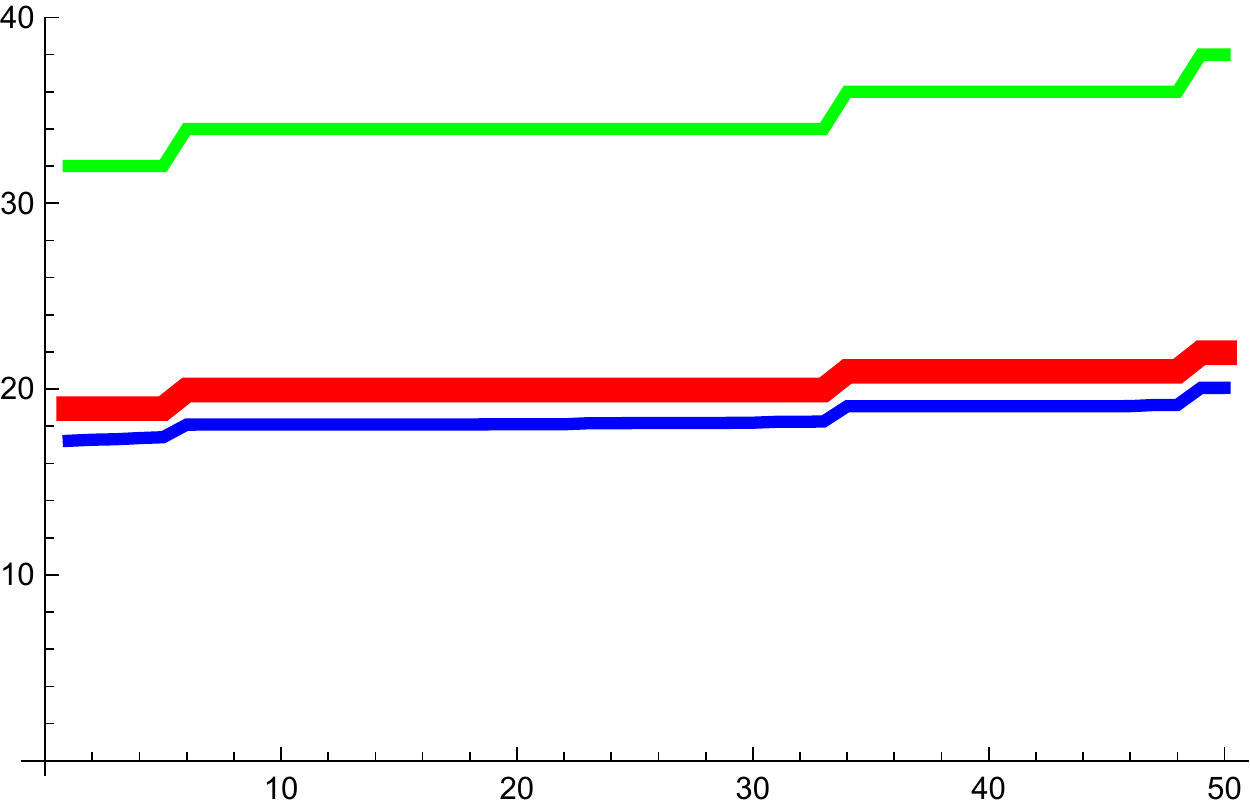}}
\caption{
\label{eigenvectors}
Measurements with random Barycentric refined graphs from $E(20,p)$ 
with $p=0.1,0.3,0,5,0.7$ are displayed. One can see again the estimate 
$2d-Q$ and then the dual vertex degree estimate.
}
\end{figure} \end{center}

\begin{center} \begin{figure}
\scalebox{0.42}{\includegraphics{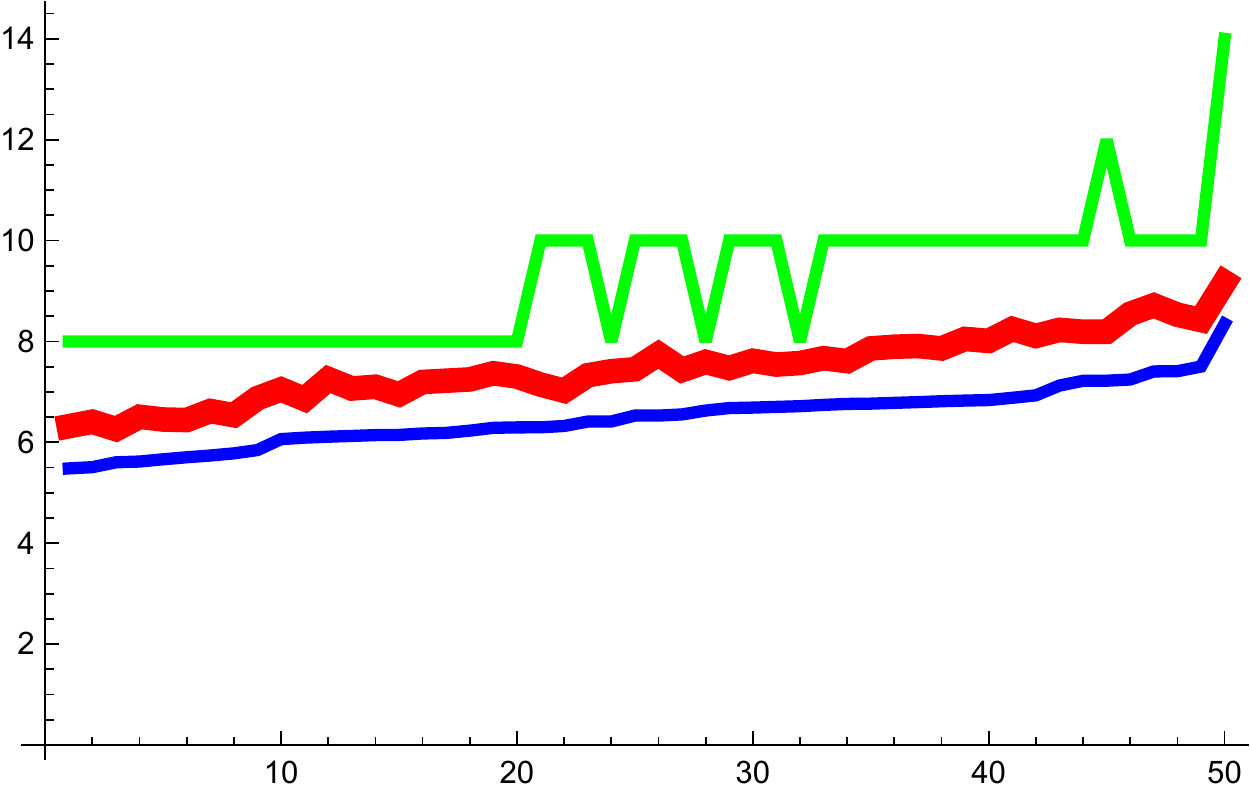}}
\scalebox{0.42}{\includegraphics{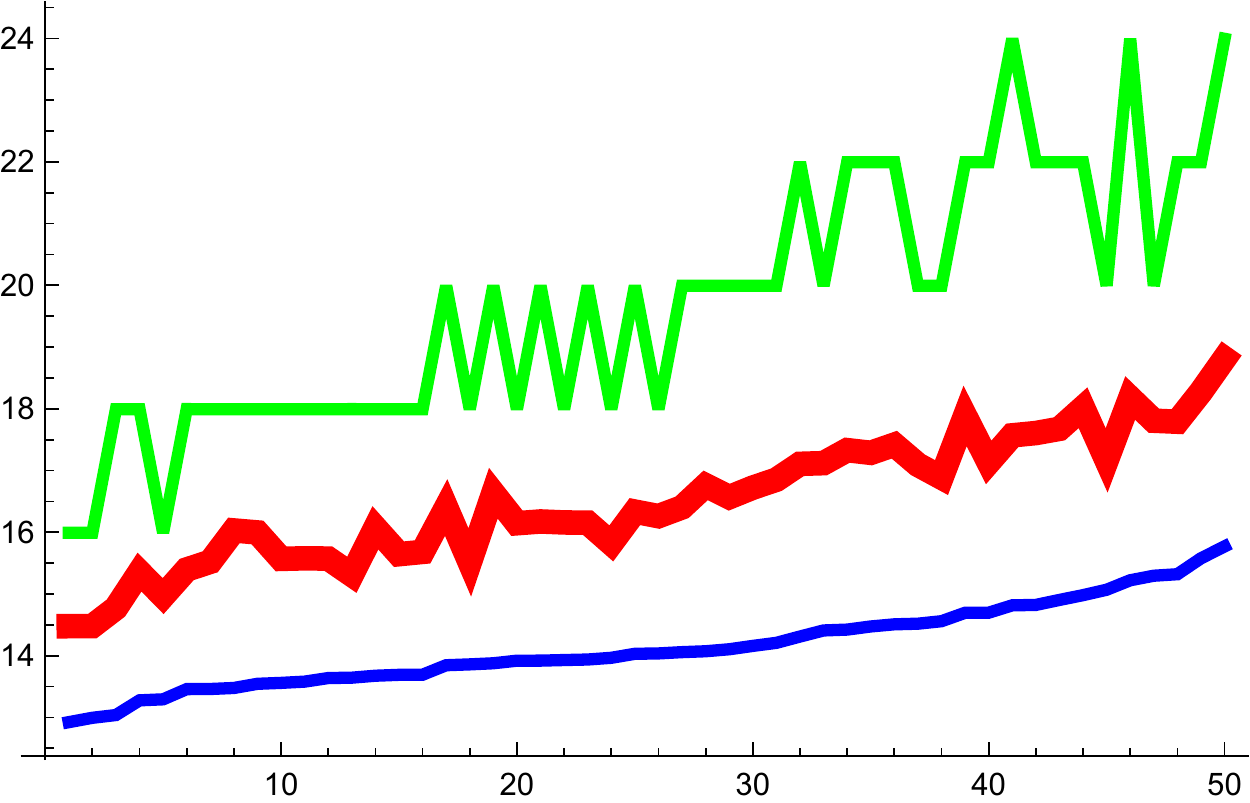}}  \\
\scalebox{0.42}{\includegraphics{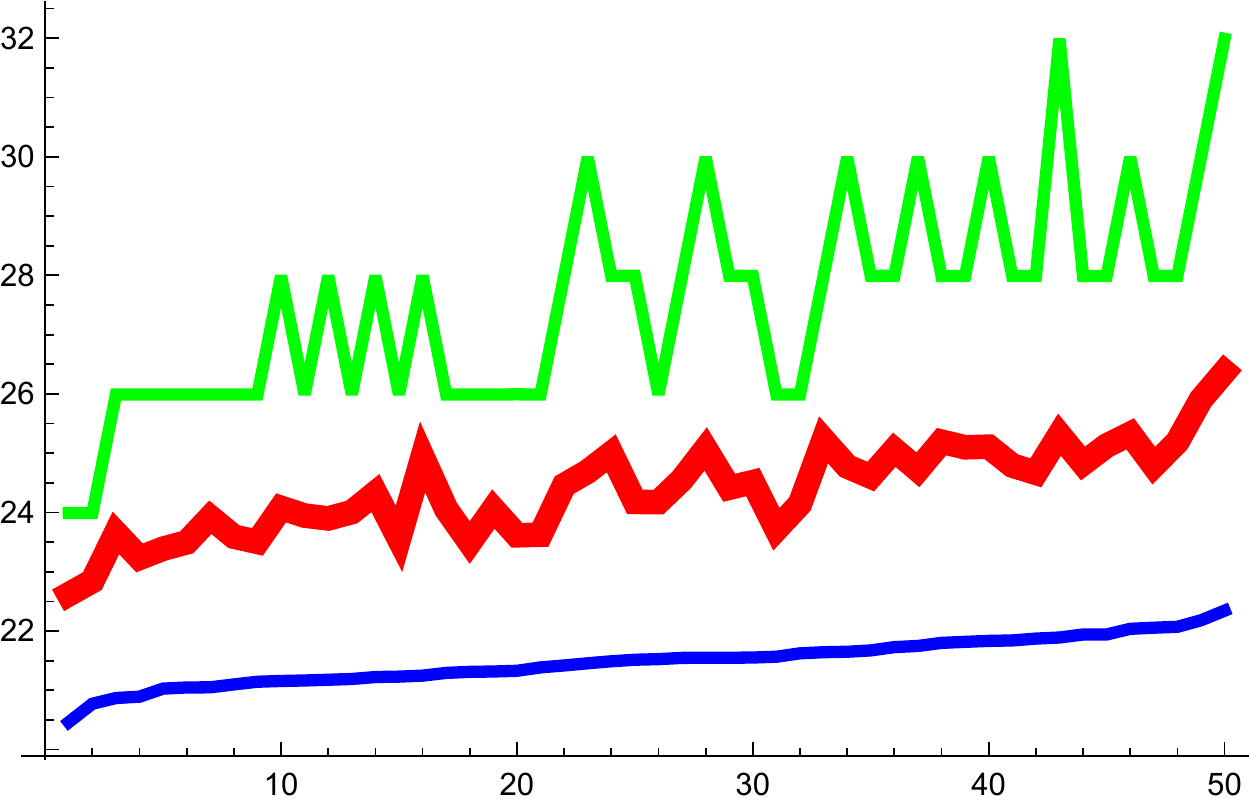}}
\scalebox{0.42}{\includegraphics{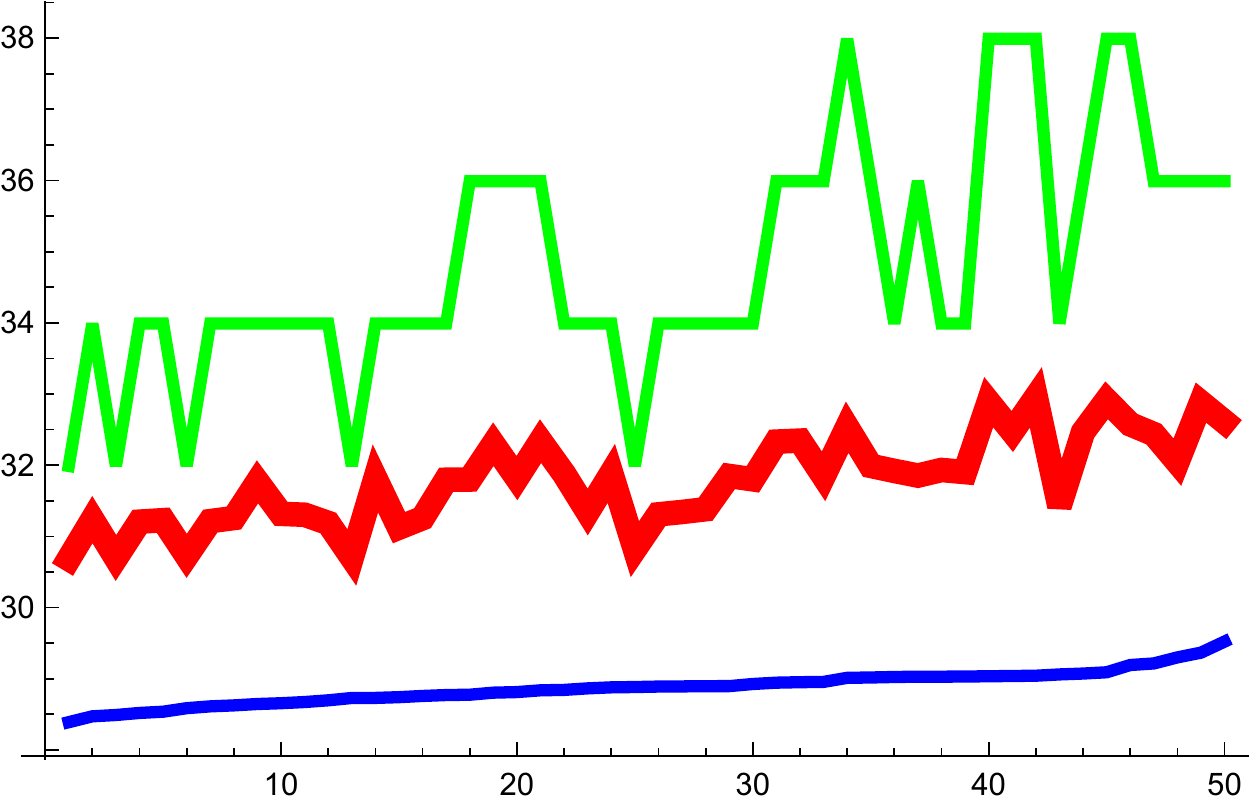}}
\caption{
\label{eigenvectors}
Measurements with random graphs from $E(20,p)$ are seen
for $p=0.1,0.3,0,5,0.7$. The thick estimate
is obtained by counting the number of walks of length $3$ in 
the connection graph $G'$ and taking the maximum. It is
better than $2d-Q$ but it also uses a larger neighborhood of
a vertex. }
\end{figure} \end{center}

\bibliographystyle{plain}

\begin{thebibliography}{10}

\bibitem{AndersonMorely1985}
W.N. Anderson and T.D. Morley.
\newblock Eigenvalues of the {L}aplacian of a graph.
\newblock {\em Linear and Multilinear Algebra}, 18(2):141--145, 1985.

\bibitem{BrualdiHoffmann}
R.~A. Brualdi and A.~J. Hoffmann.
\newblock On the spectral radius of (0,1)-matrices.
\newblock {\em Linear Algebra and its Applications}, 65:133--146, 1985.

\bibitem{Cycon}
H.L. Cycon, R.G.Froese, W.Kirsch, and B.Simon.
\newblock {\em {Schr\"odinger} Operators---with Application to Quantum
  Mechanics and Global Geometry}.
\newblock Springer-Verlag, 1987.

\bibitem{Das2004}
K.Ch. Das.
\newblock The {L}aplacian spectrum of a graph.
\newblock {\em Comput. Math. Appl.}, 48(5-6):715--724, 2004.

\bibitem{VerdiereGraphSpectra}
Y.Colin de~Verdi{\`e}re.
\newblock {\em Spectres de Graphes}.
\newblock Soci{\'e}te Math{\'e}matique de France, 1998.

\bibitem{FengLiZhang}
L.~Feng, Q.~Li, and X-D. Zhang.
\newblock Some sharp upper bounds on the spectral radius of graphs.
\newblock {\em Taiwanese J. Math.}, 11(4):989--997, 2007.

\bibitem{GroneMerrisSunder2}
R.~Grone and R.~Merris.
\newblock The {L}aplacian spectrum of a graph. {II}.
\newblock {\em SIAM J. Discrete Math.}, 7(2):221--229, 1994.

\bibitem{Guo2005}
Ji-Ming Guo.
\newblock A new upper bound for the {L}aplacian spectral radius of graphs.
\newblock {\em Linear Algebra and its Applications}, 400:61--66, 2005.

\bibitem{Hed69}
G.A. Hedlund.
\newblock Endomorphisms and automorphisms of the shift dynamical system.
\newblock {\em Math. Syst. Theor.}, 3:320--375, 1969.

\bibitem{HofKnill}
A.~Hof and O.~Knill.
\newblock Cellular automata with almost periodic initial conditions.
\newblock {\em Nonlinearity}, 8(4):477--491, 1995.

\bibitem{CausalDynamicalTriangulation}
J.~Jurkiewicz, R.~Loll, and J.~Ambjorn.
\newblock Using causality to solve the puzzle of quantum spacetime.
\newblock {\em Scientific American}, 2008.

\bibitem{Kni98}
O.~Knill.
\newblock A remark on quantum dynamics.
\newblock {\em Helvetica Physica Acta}, 71:233--241, 1998.

\bibitem{DehnSommerville}
O.~Knill.
\newblock On a {D}ehn-{S}ommerville functional for simplicial complexes.
\newblock {\\}https://arxiv.org/abs/1705.10439, 2017.

\bibitem{HearingEulerCharacteristic}
O.~Knill.
\newblock One can hear the {E}uler characteristic of a simplicial complex.
\newblock {\\}https://arxiv.org/abs/1711.09527, 2017.

\bibitem{StrongRing}
O.~Knill.
\newblock The strong ring of simplicial complexes.
\newblock {\\}https://arxiv.org/abs/1708.01778, 2017.

\bibitem{DyadicRiemann}
O.~Knill.
\newblock An elementary {D}yadic {R}iemann hypothesis.
\newblock {\\}https://arxiv.org/abs/1801.04639, 2018.

\bibitem{ListeningCohomology}
O.~Knill.
\newblock Listening to the cohomology of graphs.
\newblock {\\}https://arxiv.org/abs/1802.01238, 2018.

\bibitem{KrivelevichSudakov}
M.~Krivelevich and B~Sudakov.
\newblock The largest eigenvalue of sparse random graphs.
\newblock {\em Combinatorics, Probability and Computing}, 12:61--72, 2003.

\bibitem{ShiuChan2009}
J.~Li, W.C. Shiu, and W.H. Chan.
\newblock The {L}aplacian spectral radius of some graphs.
\newblock {\em Linear algebra and its applications}, pages 99--103, 2009.

\bibitem{LiShiuChan2010}
J.~Li, W.C. Shiu, and W.H. Chan.
\newblock The {L}aplacian spectral radius of graphs.
\newblock {\em Czechoslovak Mathematical Journal}, 60:835--847, 2010.

\bibitem{LiZhang1997}
J.S. Li and X.D. Zhang.
\newblock A new upper bound for eigenvalues of the {L}aplacian matrix of a
  graph.
\newblock {\em Linear Algebra and Applications}, 265:93--100, 1997.

\bibitem{MincNonnegative}
H.~Minc.
\newblock {\em Nonnegative Matrices}.
\newblock John Wiley and Sons, 1988.

\bibitem{Pastur}
L.~Pastur and A.Figotin.
\newblock {\em Spectra of Random and Almost-Periodic Operators}, volume 297.
\newblock Springer-Verlag, Berlin--New York, {Grundlehren} der mathematischen
  {Wissenschaften} edition, 1992.

\bibitem{Carmona}
J.Lacroix R.~Carmona.
\newblock {\em Spectral Theory of Random {S}chr\"odinger Operators}.
\newblock Birkh\"auser, 1990.

\bibitem{GroneMerrisSunder1}
R.~Merris R.~Grone and V.S. Sunder.
\newblock The {L}aplacian spectrum of a graph.
\newblock {\em SIAM J. Matrix Anal. Appl.}, 11(2):218--238, 1990.

\bibitem{Rivin}
I.~Rivin.
\newblock Walks on groups, conuting reducible matrices, polynomials and surface
  and free group automorphisms.
\newblock {\em Duke Math. J.}, 142:353--379, 2008.

\bibitem{Shi2007}
L.~Shi.
\newblock Bounds of the {L}aplacian spectral radius of graphs.
\newblock {\em Linear algebra and its applications}, pages 755--770, 2007.

\bibitem{Stanley1987}
R.~P. Stanley.
\newblock A bound on the spectral radius of graphs.
\newblock {\em Linear Algebra and its Applications}, 87:267--269, 1987.

\bibitem{Stevanovic}
D.~Stevanovic.
\newblock {\em Spectral Radius of Graphs}.
\newblock Elsevier, 2015.

\bibitem{Zhang2004}
Xiao-Dong Zhang.
\newblock Two sharp upper bounds for the {L}aplacian eigenvalues.
\newblock {\em Linear Algebra and Applications}, 376:207--213, 2004.

\bibitem{ZhouXu}
H.~Zhou and X.~Xu.
\newblock Sharp upper bounds for the {L}aplacian spectral radius of graphs.
\newblock {\em Mathematical Problems in Engineering}, 2013, 2013.

\end{thebibliography}

\end{document}